\tikzstyle arrowstyle=[scale=1]
\def\@settitle{\begin{center}%
  \bfseries\Large
  \@title
 \end{center}%
}
\patchcmd{\@setauthors}{\MakeUppercase}{\normalsize}{}{}
\theoremstyle{plain}
\newtheorem{thm}{Theorem}[section]
\newtheorem{lemma}[thm]{Lemma}
\newtheorem{cor}[thm]{Corollary}
\newtheorem{conj}[thm]{Conjecture}
\newtheorem{prob}[thm]{Problem}
\theoremstyle{remark}
\numberwithin{figure}{section}
\numberwithin{equation}{section}
\def\rr{\mathbb{R}}
\def\cA{\mathcal{A}} 
\def\cB{\mathcal{B}} 
\def\bY{\overline{Y}}
\def\bZ{\overline{Z}}
\DeclareMathOperator{\Cov}{\textnormal{Cov}} 
\DeclareMathOperator{\Eb}{\mathbb{E}} 
\DeclareMathOperator{\Pb}{\mathbb{P}} 
\DeclareMathOperator{\cC}{\mathcal{C}} 
\DeclareMathOperator{\one}{\mathbf{1}} 
\DeclareMathOperator{\cP}{\mathcal{P}} %
\DeclareMathOperator{\cS}{\mathcal{S}} %
\DeclareMathOperator{\SO}{\textnormal{SO}} 
\DeclareMathOperator{\sphere}{\mathbb{S}} 
\DeclareMathOperator{\xb}{\mathbf{x}} 
\DeclareMathOperator{\Xb}{\mathbf{X}} 
\DeclareMathOperator{\Vb}{\mathbf{V}} 
\DeclareMathOperator{\Wb}{\mathbf{W}} 
\def\ap{p} 
\def\aq{q} 
\def\ar{r} 
\definecolor{darkblue}{rgb}{0.0,0,0.7}
\newcommand{\darkblue}{\color{darkblue}}
\definecolor{darkred}{rgb}{0.68,0,0}
\definecolor{darkgreen}{rgb}{0,.38,0}
\newcommand{\defnb}[1]{\emph{\darkblue #1}}
\def\.{\hskip.06cm}
\def\ts{\hskip.03cm}
\begin{document}

\title{Unimodality for Radon partitions of random vectors}

\author{Swee Hong Chan}
\address{Department of Mathematics, Rutgers University, Piscataway, NJ 08854, USA}
\email{sweehong.chan@rutgers.edu}

\author{Gil Kalai}
\address{Einstein Institute of Mathematics, Hebrew University, Givat-Ram, Jerusalem 91904, Israel and Efi Arazi School of Computer Science, Reichman University, Herzliya 4610101, Israel}
\email{kalai@math.huji.ac.il}

\author{Bhargav Narayanan}
\address{Department of Mathematics, Rutgers University, Piscataway, NJ 08854, USA}
\email{narayanan@math.rutgers.edu}

\author{Natalya Ter-Saakov}
\address{Department of Mathematics, Rutgers University, Piscataway, NJ 08854, USA}
\email{nt399@rutgers.edu}

\author{Moshe White}
\address{Einstein Institute of Mathematics, Hebrew University, Givat-Ram, Jerusalem 91904, Israel}
\email{moshe.white@mail.huji.ac.il}

\date{1 July, 2025}
\subjclass[2010]{Primary 05A20; Secondary 05E05, 15A15, 58K05}

\begin{abstract}
	Consider the (almost surely) unique Radon partition of a set of $n$ random Gaussian vectors in $\mathbb R^{n-2}$; choose one of the two parts of this partition uniformly at random, and for $0 \le k \le n$,  let \( p_k \) denote the probability that it has size \( k \).  In this paper, we prove strong unimodality results for the distribution $(p_0,\dots,p_n)$.
\end{abstract}
\maketitle

\section{Introduction} \label{s:intro}
In 1921, Radon proved~\citep{Rad} that any set $X$ of $n$ points in $\rr^{n-2}$ can be partitioned --- such a partition is called a \defnb{Radon partition of $X$} --- into two sets whose convex hulls intersect, and that if the points are in general position (as will be the case in what follows), then this partition is unique. Radon's theorem is a foundational result in convex geometry~\citep{Bar21,Eck79,Eck93}, and was, for instance, used by Radon to give one of the early proofs of Helly's theorem.

In this paper, we explore the behaviour of the Radon partition of a random set of $n$ points in $\rr^{n-2}$, specifically, of a set of $n$ independent Gaussian vectors. For such a random set $X$ and for $0 \le k \le n$, let $p_k = p_k^{(n)}$ denote the probability that a randomly chosen part of its (almost surely) unique Radon partition has $k$ elements, and note that $p_0=p_n=0$. Our goal in this paper is to understand this probability distribution $\mathbf{p}_n = (p_0,\dots, p_n)$, and to demonstrate in particular that $\mathbf{p}_n$ has some strong unimodality properties.

The motivation to study this distribution $\mathbf{p}_n$ is related to Sylvester's `four-point question' from 1864 that asks for the probability that four random planar points are in convex position. For Gaussian planar vectors, the answer --- $(p_2)^{(4)}$ in our notation --- was given by Maehara~\citep{Mae78}. The distribution $\mathbf{p}_n$ arises naturally when studying higher-dimensional analogues of Sylvester's question, and the problem of understanding this distribution was recently raised by Frick, Newman, and Pegden~\citep{FNP25} and by White~\citep{White} in precisely this context. While an exact description of the distribution $\mathbf{p}_n$ seems out of reach, we shall nevertheless establish a number of nontrivial properties of this distribution.

\subsection*{Main results}
Let $X = \{\Xb_1,\dots, \Xb_{n}\} \subset \rr^{n-2}$ be a set of $n$ independent standard normal random vectors. Note that these points are in general position almost surely, so they almost surely admit a unique Radon partition; we denote this partition by $\{\cA_n,\cB_n\}$. From these two sets, we choose one uniformly at random, which we call $\cS_n$, and we write $S_n=|\cS_n|$. For $n \geq 3$ and $0 \le k \le n$, let
\begin{equation}\label{eq:p_k}
	\ap_k = \ap_k^{(n)} = \Pb[S_n=k]
\end{equation}
be the probability that a randomly chosen part of the Radon partition of $X$ has size $k$. Writing $[n]$ for the set $\{1,\dots,n\}$, our main result is the following theorem.

\begin{thm}[Ultra log-concavity]\label{thm:ULC}
	For each $n \geq 3$, the distribution $\mathbf{p}_n = (p_0,\dots, p_n)$ is ultra log-concave, i.e., we have
	\[
		\left( \frac{\ap_{k}}{\binom{n}{k}} \right)^{2} \geq \frac{\ap_{k+1}}{\binom{n}{k+1}} \frac{\ap_{k-1}}{\binom{n}{k-1}}
	\]
	for each $k \in [n-1]$.
\end{thm}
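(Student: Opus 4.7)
The plan is to first reduce, via Gale duality, the ultra log-concavity of $\{p_k\}$ to a log-concavity statement about a certain Gaussian sign-pattern probability, and then to establish the latter via real-rootedness of a suitable generating polynomial.

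\emph{Gale dual reformulation.} Almost surely, the Gaussian vectors $X_1, \ldots, X_n \in \mathbb{R}^{n-2}$ admit a unique (up to scalar) nonzero linear dependence $\sum_i a_i X_i = 0$ satisfying $\sum_i a_i = 0$; the Radon partition is then $\{\{i : a_i > 0\}, \{i : a_i < 0\}\}$. The key observation, which I would establish using the rotation invariance of the Gaussian distribution, is that the direction $a/\|a\|$ is uniformly distributed on the unit sphere of $H := \{v \in \mathbb{R}^n : \sum_i v_i = 0\}$. Indeed, $a$ is the orthogonal complement within $H$ of the $n-2$ iid standard Gaussian vectors obtained by projecting the rows of the $(n-1) \times n$ matrix with columns $(X_i, 1)^\top$ onto $H$, and the orthogonal complement of $n-2$ iid standard Gaussians in $H$ spans a uniformly random line. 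Equivalently, the sign pattern of $a$ has the same distribution as that of $(g_1 - \bar g, \ldots, g_n - \bar g)$ for $g_1, \ldots, g_n$ iid standard Gaussians with sample mean $\bar g$.

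\emph{Reduction to log-concavity.} Setting $N := \#\{i : g_i > \bar g\}$, by exchangeability and the symmetry $a \leftrightarrow -a$ one obtains $p_k = \Pb[S_n = k] = \Pb[N = k] = \binom{n}{k} q_k$, where $q_k$ denotes the probability of any specific sign pattern with exactly $k$ positive coordinates. The ultra log-concavity in the theorem is thus equivalent to $q_k^2 \geq q_{k-1} q_{k+1}$, i.e., to the ordinary log-concavity of the single-pattern probabilities, equivalently the ultra log-concavity of the distribution of $N$.

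\emph{Main step: real-rootedness of the generating polynomial.} My proposed approach is to prove the stronger claim that the probability generating function $Q(t) := \Eb[t^N] = \sum_k p_k t^k$ is real-rooted; by Newton's inequalities this yields the ultra log-concavity of $\{p_k\}$. By the Grace--Walsh--Szeg\H{o} coincidence theorem, real-rootedness of $Q$ is equivalent to real stability of its polarization, the symmetric multi-affine polynomial
\[
    P(t_1, \ldots, t_n) := \Eb\!\left[\prod_{i=1}^n \bigl(\mathbf{1}_{\{h_i \leq 0\}} + t_i \mathbf{1}_{\{h_i > 0\}}\bigr)\right] = \sum_{S \subseteq [n]} q_{|S|} \prod_{i \in S} t_i,
\]
where $h = g - \bar g \,\mathbf{1}$ is the corresponding centered Gaussian on $H$. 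Real stability of $P$ is in turn equivalent to the distribution of the random sign set $\{i : h_i > 0\} \subseteq [n]$ being a strongly Rayleigh measure. Given the equicorrelated Gaussian structure of $h$ (with pairwise correlation $-1/(n-1)$) and the strong negative dependence it induces on the sign indicators, strong Rayleigh is a natural hypothesis; I would attack it either by verifying the relevant negative-correlation inequalities directly using Gaussian orthant-probability formulas, or by exhibiting the sign-set distribution as a determinantal point process with a suitable Hermitian kernel.

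\emph{Main obstacle.} The hardest step is establishing this real stability (equivalently, strong Rayleigh) property of the sign-pattern distribution. Should the direct stability argument resist, a fallback is a measure-preserving bijective approach: constructing an injection from $C_{k-1, n} \times C_{k+1, n}$ into $C_{k, n} \times C_{k, n}$, where $C_{j, n} \subset H$ is the open cone associated with a fixed sign pattern with $j$ positives, would likewise yield the desired log-concavity. The chief difficulty in any such bijection is the linear constraint $\sum_i h_i = 0$, which prevents naive coordinate swaps from both preserving the Gaussian measure and producing the right sign patterns.
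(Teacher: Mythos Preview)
Your reduction via Gale duality to the log-concavity of the Youden probabilities $q_k$ is correct and is exactly what the paper does (its Lemma~5.1 establishes $p_k = \binom{n}{k} q_k$ by the same uniform-on-$H$ argument you outline).

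The gap is in your main step. You do not prove that $Q(t)=\sum_k p_k t^k$ is real-rooted, nor that the sign-set law is strongly Rayleigh or determinantal; you only propose these as hypotheses to be attacked. Neither attack is carried out, and neither is routine: there is no evident Hermitian kernel making $\{i:h_i>0\}$ a DPP, and ``verifying negative-correlation inequalities via orthant formulas'' is not a recognisable path to real stability (strong Rayleigh requires far more than pairwise negative correlation). Real-rootedness is strictly stronger than the ultra log-concavity actually asserted, so you have replaced the target by a harder, unproven claim. Your bijective fallback is likewise only a wish, and you yourself identify the obstruction (the constraint $\sum_i h_i=0$).

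The paper avoids all of this. It proves $q_k^2 \ge q_{k-1}q_{k+1}$ directly via a conditional positive association theorem for what it calls \emph{repulsive--cooperative Gaussian ensembles}: jointly Gaussian $Z_1,\dots,Z_m$ with $\mathbb{E}[Z_iZ_j]\le 0$ for $i\ne j$ and $\mathbb{E}[Z_i\sum_j Z_j]\ge 0$. For such ensembles it shows (by an induction on $m$ using a coupling lemma) that $\bar Z$ and each $Z_i$ remain positively associated after conditioning on any box event $\bigcap_i\{Z_i\in[a_i,b_i]\}$. Taking $Z_i = \bar Y - Y_i$ with one index omitted, this yields that $q_k/(q_k+q_{k+1})$ is nondecreasing in $k$, which is equivalent to log-concavity of $(q_k)$. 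No real-rootedness or stability is invoked.
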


We prove Theorem~\ref{thm:ULC} in Section~\ref{sec:proof-ULC}.
The following consequence of Theorem~\ref{thm:ULC} establishes a conjecture of White~\citep{White} asserting that the more balanced a partition is, the more likely it is to occur as the Radon partition of a Gaussian random set of points.

\begin{cor}[Unimodality]\label{cor:unimodality}
	For each $n \geq 3$, the distribution $\mathbf{p}_n = (p_0,\dots, p_n)$ is unimodal, i.e.,
	\[
		\ap_0 \leq \ap_1 \leq \dots \leq \ap_{\lfloor \frac{n}{2}\rfloor} = \ap_{\lceil \frac{n}{2}\rceil} \geq \dots \geq \ap_{n-1} \geq \ap_{n}. \]
\end{cor}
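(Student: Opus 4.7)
\textbf{Proof proposal for Corollary~\ref{cor:unimodality}.}

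The plan is to deduce unimodality from three ingredients: ordinary log-concavity of $(p_k)$, the symmetry $p_k = p_{n-k}$, and strict positivity $p_k > 0$ on $\{1,\ldots,n-1\}$. Log-concavity is immediate from Theorem~\ref{thm:ULC}: multiplying through by the log-concavity of the binomial coefficients, $\binom{n}{k}^2 \geq \binom{n}{k-1}\binom{n}{k+1}$, yields $p_k^2 \geq p_{k-1} p_{k+1}$. Symmetry holds because $\{\cA_n, \cB_n\}$ is an \emph{unordered} partition with $|\cA_n|+|\cB_n|=n$ and $\cS_n$ is chosen uniformly from this pair, so $|\cS_n|$ and $n-|\cS_n|$ are equidistributed.

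For positivity, I would exhibit, for each $1 \leq k \leq n-1$, an explicit configuration of $n$ points in $\rr^{n-2}$ whose unique Radon partition has type $(k,n-k)$: take a $(k-1)$-simplex on $k$ of the points and an $(n-k-1)$-simplex on the remaining $n-k$ points, with affine hulls that intersect transversely in a single point lying in the relative interior of both (the dimensions match, $(k-1)+(n-k-1)=n-2$). The Radon partition is locally constant on the open dense set of general position configurations, so having a $(k,n-k)$ partition is an open condition, and the full-support Gaussian assigns it positive measure; this gives $p_k + p_{n-k} > 0$, hence $p_k > 0$ by symmetry.

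With $p_k > 0$ throughout $\{1,\ldots,n-1\}$, the ratios $r_k = p_{k+1}/p_k$ are well defined and log-concavity makes them non-increasing in $k$. Suppose toward contradiction that $r_k < 1$ for some $k \leq \lfloor n/2\rfloor - 1$; then $r_{k'} \leq r_k < 1$ for every $k' \geq k$ in range, producing a strictly decreasing chain $p_k > p_{k+1} > \cdots > p_{n-k}$ (note $n-k \geq k+2$ here), which contradicts $p_k = p_{n-k}$. Hence $p_k \leq p_{k+1}$ for $1 \leq k \leq \lfloor n/2\rfloor - 1$; together with the trivial $p_0 = 0 \leq p_1$ this gives the ascending half, the descending half follows by symmetry, and $p_{\lfloor n/2\rfloor} = p_{\lceil n/2\rceil}$ is immediate. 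The one delicate step is positivity: ultra log-concavity alone does not rule out internal zeros --- for instance the symmetric ULC sequence $(0,1/2,0,0,1/2,0)$ on $\{0,\ldots,5\}$ fails to be unimodal --- so the configuration/openness argument is essential.
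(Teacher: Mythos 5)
Your proof is correct and follows essentially the same route as the paper's: ultra log-concavity gives ordinary log-concavity, the symmetry $p_k=p_{n-k}$ from the construction of $\cS_n$ places the mode in the middle, and strict positivity on $\{1,\dots,n-1\}$ rules out internal zeros so that log-concavity forces unimodality. The only difference is that you justify the no-internal-zeros step explicitly (via an open set of configurations whose Radon partition has type $(k,n-k)$), whereas the paper simply asserts it as clear.
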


We prove Theorem~\ref{thm:ULC} by reducing it to a problem involving the log-concavity of sequences arising from the behaviour of percentiles of normal random variables. This is a problem of some independent interest and we shall say more about it below.
We  also note that it remains an open problem to determine whether unimodality still holds when $\Xb_1,\ldots,\Xb_n$ are sampled uniformly from a bounded convex set $K$, see  Problem~\ref{prob:model-K}.

\subsection*{{Log-concavity of Youden's demon sequence}}
Let $Y_1,\dots, Y_n$ be a set of $n$ independent standard normal random variables, and let $\bY = ({Y_1+\cdots+Y_n})/{n}$ denote their \defnb{sample mean}. For $k \in [n]$, we write
\begin{equation}\label{eq:q_k}
	\aq_k = \aq_k^{(n)} = \Pb \left [ \{Y_1,\dots, Y_k \leq \bY \leq Y_{k+1}, \dots, Y_{n} \}\right]
\end{equation}
for the probability that the sample mean is greater than the first $k$ samples and less than the last $n-k$ samples. The determination of these probabilities --- this is the problem of \defnb{Youden's demon} --- has been the subject of extensive work in mathematical statistics; see~\citep{You53,Ken,DmiGov97,FNP25}, for examples.

We shall deduce our main result, Theorem~\ref{thm:ULC}, from the following result about Youden's demon problem.
\begin{thm}[Log-concavity]\label{thm:LC}
	For each $n\ge 3$, the distribution $\mathbf{q}_n = (q_0,\dots, q_n)$ is log-concave, i.e., we have
	\begin{equation}\label{eq:LC}
		\aq_{k}^2 \geq \aq_{k+1} \aq_{k-1}
	\end{equation}
	for each $k \in [n-1]$.
\end{thm}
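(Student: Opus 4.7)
I would reduce \cref{thm:LC} to a conditional negative correlation statement and verify that statement using the Gaussian structure. Fix $k \in [n-1]$ and set
\[
A := \{Y_k \le \bar{Y}\}, \quad B := \{Y_{k+1} \le \bar{Y}\}, \quad \mathcal{E} := \{Y_1, \dots, Y_{k-1} \le \bar{Y}\} \cap \{Y_{k+2}, \dots, Y_n \ge \bar{Y}\}.
\]
Unpacking \eqref{eq:q_k} together with the exchangeability of $Y_k$ and $Y_{k+1}$, we have $q_{k-1} = \Pb(\mathcal{E} \cap A^c \cap B^c)$, $q_k = \Pb(\mathcal{E} \cap A \cap B^c) = \Pb(\mathcal{E} \cap A^c \cap B)$, and $q_{k+1} = \Pb(\mathcal{E} \cap A \cap B)$. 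A short manipulation of these four identities gives
\[
q_k^2 \ge q_{k-1}\, q_{k+1} \iff \Pb(A \cap B \mid \mathcal{E}) \le \Pb(A \mid \mathcal{E})\, \Pb(B \mid \mathcal{E}),
\]
so \eqref{eq:LC} is equivalent to the conditional negative correlation of $A$ and $B$ given $\mathcal{E}$.

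To establish the conditional negative correlation, I would change variables from $(Y_k, Y_{k+1})$ to $S := Y_k + Y_{k+1}$ and $Z := (Y_k - Y_{k+1})/2$, which are independent with $S \sim N(0, 2)$ and $Z \sim N(0, 1/2)$ and both independent of $\vec{Y}' := (Y_j)_{j \ne k, k+1}$. In these coordinates, both $\bar{Y}$ and the event $\mathcal{E}$ are functions of $(\vec{Y}', S)$ alone, while
\[
A = \{Z \le \bar{Y} - S/2\} \quad \text{and} \quad B = \{Z \ge S/2 - \bar{Y}\}
\]
are \emph{anti-monotone} in the single Gaussian $Z$. Writing $p := \Pb(A \mid \vec{Y}', S) = \Pb(B \mid \vec{Y}', S) = \Phi(\sqrt{2}\,(\bar{Y} - S/2))$, a direct computation gives $\Pb(A \cap B \mid \vec{Y}', S) = (2p-1)_+$ and hence the pointwise bound $\Pb(A \cap B \mid \vec{Y}', S) \le p^2$ with slack exactly $\min(p, 1-p)^2$.

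It remains to average this pointwise bound over the conditional distribution of $(\vec{Y}', S)$ given $\mathcal{E}$. By the law of total covariance applied under the measure $\Pb(\cdot \mid \mathcal{E})$,
\[
\Cov(A, B \mid \mathcal{E}) \;=\; \operatorname{Var}(p \mid \mathcal{E}) \;-\; \Eb\bigl[\min(p, 1-p)^2 \bigm| \mathcal{E}\bigr],
\]
so the desired conditional negative correlation reduces to the one-dimensional inequality
\[
\operatorname{Var}(p \mid \mathcal{E}) \;\le\; \Eb\bigl[\min(p, 1-p)^2 \bigm| \mathcal{E}\bigr].
\]
Since $p = \Phi(\tau/\sqrt{2})$ where $\tau := 2\bar{Y} - S$ is an explicit linear functional of $(\vec{Y}', S)$ whose conditional distribution under $\mathcal{E}$ is a Gaussian restricted to a polyhedron, establishing this inequality is where the ``log-concavity of sequences arising from percentiles of normal random variables'' enters: it should reduce to a univariate log-concavity property of $\Phi$ paired with an explicit Gaussian tail estimate.

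\textbf{Main obstacle.} The hardest step is the variance-versus-balancedness estimate displayed above. A naive pointwise bound is manifestly insufficient---for a two-point distribution concentrated on $p \in \{0, 1\}$ the inequality fails with a macroscopic gap---so one must genuinely exploit the fact that $\mathcal{E}$ forces $\vec{Y}'$ into a ``balanced'' configuration in which the random variable $p$ is pushed away from $\{0, 1\}$. Making this precise likely requires a careful analysis of the truncated Gaussian density of $\tau$ restricted to $\mathcal{E}$, combined with the log-concavity of the normal CDF $\Phi$.
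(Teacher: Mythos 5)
Your reduction is correct as far as it goes: with $A=\{Y_k\le \bY\}$, $B=\{Y_{k+1}\le \bY\}$ and $\mathcal{E}$ as you define it, the identities $q_{k-1}=\Pb(\mathcal{E}\cap A^c\cap B^c)$, $q_k=\Pb(\mathcal{E}\cap A\cap B^c)=\Pb(\mathcal{E}\cap A^c\cap B)$, $q_{k+1}=\Pb(\mathcal{E}\cap A\cap B)$ hold, and using that symmetry, \eqref{eq:LC} at $k$ is indeed equivalent to $\Pb(A\cap B\mid\mathcal{E})\le\Pb(A\mid\mathcal{E})\Pb(B\mid\mathcal{E})$; the change of variables to $(S,Z)$ and the identity $\Cov(\mathbf{1}_A,\mathbf{1}_B\mid\mathcal{E})=\operatorname{Var}(p\mid\mathcal{E})-\Eb[\min(p,1-p)^2\mid\mathcal{E}]$ are also correct. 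But the proposal stops exactly where the theorem begins: the inequality $\operatorname{Var}(p\mid\mathcal{E})\le\Eb[\min(p,1-p)^2\mid\mathcal{E}]$ is, by your own chain of equivalences, just a restatement of \eqref{eq:LC}, and no proof of it is given. It also cannot follow from the soft facts you gesture at. The conditional law of $\tau=2\bY-S$ given $\mathcal{E}$ is log-concave (Pr\'ekopa), but log-concavity alone is insufficient: if $\tau$ were, say, uniform on a long interval, then $p=\Phi(\tau/\sqrt2)$ would be essentially a fair Bernoulli variable, making the left side about $1/4$ and the right side nearly $0$. Moreover the unconditional variance of $\tau$ is $2(n-2)/n\approx 2$, the same order as the transition scale of $\Phi(\cdot/\sqrt2)$, so the inequality is genuinely borderline: one needs quantitative control of how the polyhedral event $\mathcal{E}$ (whose constraints are correlated with $\tau$ through $\bY$) concentrates $\tau$, uniformly in $k$ and $n$. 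No such estimate is supplied, and ``a univariate log-concavity property of $\Phi$ plus a Gaussian tail estimate'' is not an argument that visibly closes this gap.

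For comparison, the paper avoids bounding any variance. It rewrites $q_k/(q_k+q_{k+1})$ as a conditional probability $r_k$ for the ensemble $Z_i=\bY-Y_i$ (with index $k+1$ omitted), checks that this is a repulsive--cooperative Gaussian ensemble, and deduces the monotonicity $r_{k}\ge r_{k-1}$ from a conditional positive-association theorem (Theorem~\ref{thm:CNA} via Lemma~\ref{lem:red-Y}), which is itself proved by induction on the number of variables with a coupling lemma; normality enters through the sign conditions on the covariance matrix, which are preserved when one coordinate is integrated out. To finish along your route you would need an analogous mechanism to control the law of $\tau$ given $\mathcal{E}$, at which point you are essentially reproving that association result. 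In short: a correct and clean reformulation, but with a genuine gap at the main step.
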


We prove Theorem~\ref{thm:LC} in Section~\ref{sec:proof-LC}.
We note that the assumption that $Y_1,\ldots, Y_n$ are normally distributed is essential here (as we shall see in Section~\ref{sec:counter-example}).
The reduction of Theorem~\ref{thm:ULC} to Theorem~\ref{thm:LC} (see Lemma~\ref{lem:iso}) more or less follows from the work of Frick, Newman, and Pegden~\citep{FNP25} and White~\citep{White} who, in turn, rely on a general result by Baryshnikov and Vitale~\citep{BV97}.

Let us also note that the assumption of normality cannot be dropped from Theorem~\ref{thm:LC}; we shall exhibit an i.i.d.\ collection of non-normal random variables for which the corresponding sequence $\aq_0,\dots, \aq_n$ is not unimodal, and therefore not log-concave.

\subsection*{Conditional associations}
We will prove Theorem~\ref{thm:LC} as a consequence of a more general phenomenon about \defnb{conditional associations}, but to state the precise result driving this, we need some notation.

Two events $A$ and $B$ in a probability space are said to be \defnb{positively associated} if $\Pb[A \cap B] \geq \Pb[A] \Pb[B]$, and we write $A \uparrow B$ to denote positive association. Analogously, we say that two real-valued random variables $X$ and $Y$ are \defnb{positively associated} if
\[
	\{X \geq s \} \uparrow \{Y \geq t \}
\]
for all $s,t \in \rr$, or equivalently if
\[
	\Eb[f(X) g(Y)] \geq \Eb[f(X)] \Eb[g(Y)]
\]
for all increasing functions $f,g: \rr \to \rr$; again, we write $X \uparrow Y$ to denote positive association (though we note that this differs slightly from the usage of this terminology in~\citep{KN}). Finally, for a collection of events $\cC$, we say that $X$ and $Y$ are \defnb{positively associated given $\cC$} if $X \uparrow Y$ conditional on any event in $\cC$, i.e., if
\[ \Pb \left[ X \geq s, Y \geq t \, \vert\, C \right ] \geq \Pb \left[ X \geq s \, \vert \, C \right ] \Pb \left[ Y \geq t \, \vert \, C \right ]\]
for all $s,t \in \rr$ and $C \in \cC$.

A sequence of random variables $Z_1,\dots, Z_m$ is \defnb{jointly normal} if there exist independent normal random variables $Y_1,\dots, Y_n$ such that each $Z_i$ is a linear combination of the $Y_j$'s. An important property that we will use throughout this paper is that for any pair of random variables $X, Y$ that is jointly normal, $X$ is independent of $Y$ if and only if $\Cov(X,Y)=\Eb[XY] - \Eb[X]\Eb[Y] = 0$; see e.g.~\citep{BH}, for a proof.

We shall study jointly normal random variables $Z_1,\dots, Z_m$ with the following properties:
\begin{enumerate}[(A)]
	\item\label{eq:N} $\Eb[Z_i] = 0$ for all $i \in [m]$, $\Eb[Z_i Z_j ] \leq 0$ for all distinct $i,j \in [m]$, and
	\item\label{eq:P} $\Eb[Z_i (Z_1+\dots +Z_m)] \geq 0$ for all $i \in [m]$.
\end{enumerate}
Note that~\ref{eq:N} says that the covariance matrix of $Z_1,\dots, Z_m$ is a \defnb{Stieltjes matrix} (see~\citep{KR}, for more details on this subject), while~\ref{eq:P} tells us that each $Z_i$ has nonnegative covariance with the sample mean $\bZ$. We call such a collection of random variables a \defnb{repulsive–cooperative Gaussian ensemble}.

With the convention that $-\infty \le a \le b \le \infty$ whenever we refer to a closed interval $[a,b]$, we have the following theorem about conditional associations of repulsive–cooperative Gaussian ensembles.

\begin{thm}\label{thm:CNA}
	Let $Z_1,\dots, Z_{m}$ be a repulsive–cooperative Gaussian ensemble, and let $\cC$ be the collection of events of the form
	\begin{equation}\label{eq:cond}
		\bigcap_{i\in[m]} \left\{ Z_i \in [a_i,b_i] \right \}.
	\end{equation}
	Then, for all $i \in [m]$, $\bZ$ and $Z_i$ are positively associated given $\cC$.
\end{thm}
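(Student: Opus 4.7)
The plan is to reduce Theorem~\ref{thm:CNA} to a one-variable monotonicity statement via a Chebyshev/tower-property argument, and then to prove that monotonicity by exploiting the joint strength of properties~(A) and~(B).

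\emph{Chebyshev reduction.} Fix $i\in[m]$, an event $C = \bigcap_j \{Z_j \in [a_j,b_j]\} \in \cC$, and increasing functions $f,g\colon\rr\to\rr$.  Let $C_{-i} = \bigcap_{j\ne i}\{Z_j \in [a_j,b_j]\}$ and define
\[ h(z) := \Eb[f(\bZ)\mid Z_i=z,\ C_{-i}] \quad \text{for } z \in [a_i,b_i]. \]
By the tower property (using that $\{Z_i \in [a_i,b_i]\}$ is automatic once $Z_i = z \in [a_i,b_i]$), $\Eb[f(\bZ) g(Z_i) \mid C] = \Eb[g(Z_i)\, h(Z_i) \mid C]$. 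So if $h$ is non-decreasing on $[a_i,b_i]$, then Chebyshev's sum inequality applied to the univariate conditional law of $Z_i$ given $C$ (with respect to the non-decreasing $g$ and $h$) yields the desired positive association.  The problem is therefore reduced to showing that $h$ is non-decreasing.

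\emph{Setting up the monotonicity.} Write $\mu_j := \Eb[Z_iZ_j]/\Eb[Z_i^2]$ for $j\ne i$, so $\mu_j \le 0$ by~(A), and $\alpha := 1 + \sum_{j\ne i}\mu_j = m\,\Eb[Z_i\bZ]/\Eb[Z_i^2] \ge 0$ by~(B).  Conditionally on $Z_i = z$, the vector $W := (Z_j)_{j\ne i}$ is Gaussian with mean $\mu z$ and Schur-complement covariance $\Sigma'$.  Centering $\tilde W := W - \mu z \sim N(0,\Sigma')$ (independent of $z$) gives
\[ \bZ = \frac{\alpha z + \sum_{j\ne i}\tilde W_j}{m}, \qquad \text{with conditioning } \tilde W \in B_z := \prod_{j\ne i}[a_j - \mu_j z,\ b_j - \mu_j z]. \]
Since $-\mu_j \ge 0$, the box $B_z$ translates uniformly upward as $z$ grows, so as $z$ increases both the deterministic shift $\alpha z/m$ and (heuristically) the conditional distribution of $\sum_{j\ne i}\tilde W_j$ move upward.

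\emph{Monotonicity via a coupling.} I would establish $h$'s monotonicity by constructing a monotone coupling across $z$ of the conditional laws of $\tilde W$ and arguing that $\bZ$ is almost surely non-decreasing in $z$ under the coupling.  In the base case $m=2$, $\tilde W$ is a univariate truncated Gaussian, and a direct computation with the truncated-normal CDF shows that its quantile function is $1$-Lipschitz in the mean parameter; under the quantile coupling driven by a single uniform $U$ one obtains $|Z_2(z') - Z_2(z)| \le |\mu_2|\,|z' - z|$, which, combined with $\alpha = 1 + \mu_2 \ge 0$, gives $(z' + Z_2(z')) - (z + Z_2(z)) \ge \alpha(z' - z) \ge 0$, so $\bZ$ is almost surely non-decreasing under the coupling.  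For general $m$, I would iterate coordinate by coordinate via a sequential quantile coupling, using inductively the observation that the subsystem $(Z_j)_{j\ne i}$ is \emph{itself} a repulsive--cooperative Gaussian ensemble: $\Sigma'$ is Stieltjes (so (A) descends), and a short calculation --- using (B) applied to both $Z_i$ and each $Z_k$ --- shows that the row sums of $\Sigma'$ are non-negative, which is precisely (B) for the subsystem.

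\emph{The main obstacle.} The core difficulty is that the Gaussian density of $\tilde W$ on $B_z$ is log-\emph{sub}modular: since $\Sigma$ is Stieltjes, $\Sigma^{-1}$ (and hence $(\Sigma')^{-1}$) has non-negative off-diagonal entries, making pairs of coordinates of $\tilde W$ \emph{negatively} associated.  Direct FKG arguments on the joint $\tilde W$-distribution therefore go the wrong way, and individual coordinates of $\tilde W$ need not behave monotonically under the translation of the box $B_z$.  The proof must show that the relevant aggregate $\sum_{j\ne i}\tilde W_j$ nonetheless moves monotonically, and this is exactly where property~(B) --- in the form $\alpha\ge 0$ in the scalar step, and in the form of the non-negative row sums of $\Sigma'$ in the inductive step --- provides the favourable cancellation among the pairwise negative correlations.
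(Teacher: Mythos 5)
Your reduction is sound as far as it goes: the Chebyshev step correctly reduces conditional positive association to showing that $h(z)=\Eb[f(\bZ)\mid Z_i=z,\ C_{-i}]$ is non-decreasing; your observation that the conditional (Schur-complement) system $(Z_j)_{j\ne i}$ is again a repulsive--cooperative ensemble is correct (it is the same computation the paper performs for the variables $Z_j'=Z_j+\alpha_j Z_m$); and the $m=2$ case via the $1$-Lipschitz dependence of the truncated-normal quantile on its location parameter works. The gap is that the statement you actually need for $m\ge 3$ --- that translating the box $B_z$ upward (by the different amounts $-\mu_j(z'-z)$ in each coordinate) moves the conditional law of the aggregate $\sum_{j\ne i}\tilde W_j$ upward by enough to be compensated by $\alpha(z'-z)\ge 0$ --- is never proved. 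You flag it yourself as ``the main obstacle'' and write that ``the proof must show'' it; the ``sequential quantile coupling'' is only a plan, and the base-case mechanism (a pointwise Lipschitz bound for a single quantile) does not obviously propagate: once you couple one coordinate, the truncation intervals of the remaining coordinates shift by data-dependent amounts (the conditional covariances are still nonpositive off the diagonal, so fixing one coordinate high pushes the others' conditional means down), and you would need an unformulated, unproven ``aggregate Lipschitz'' control on the conditional sum. Since this multi-coordinate monotonicity is exactly the content of the theorem (the one-free-coordinate case is easy), the proposal is incomplete at its crux.

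For comparison, the paper circumvents precisely this point not with a quantitative coupling but with a decoupling change of variables inside an induction on $m$ (Lemma~\ref{lem:inc}): setting $Z_j'=Z_j+\alpha_j Z_m$ with $\alpha_j=-\Eb[Z_jZ_m]/\Eb[Z_m^2]\ge 0$ makes $Z_1',\dots,Z_{m-1}'$ independent of $Z_m$, rewrites each event $\{Z_j\in[a_j,b_j]\}$ as $\{Z_j'\in[a_j+\alpha_j Z_m,\,b_j+\alpha_j Z_m]\}$ and $\bZ$ as an increasing function of $\overline{Z'}$ and $Z_m$ (using $\beta\ge 0$ from the cooperative condition), so that the realized value of $Z_m$ enters only through monotone shifts of the arguments of the $(m-1)$-variable function $g$, which is increasing by the induction hypothesis; the comparison between the two truncation intervals for $Z_m$ is then handled by a purely qualitative monotone coupling (Lemma~\ref{lem:coupling}), with no Lipschitz estimate needed. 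If you want to complete your route, the cleanest fix is to replace the sequential quantile coupling by this decoupling-plus-induction step, strengthening the statement you induct on to monotonicity of the conditional probability in all interval endpoints (and the threshold) simultaneously, rather than trying to control each coordinate's movement pointwise.
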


We prove Theorem~\ref{thm:CNA} in Section~\ref{sec:proof-CNA}.
Let us note that the positive association of $\bZ$ and $Z_i$ without conditioning is a consequence of~\ref{eq:P}; see~\citep{Pitt} for a proof. However, conditioning on $\cC$ does not in general preserve positive associations (as we shall see in Section~\ref{sec:counter-example}), so we need a more involved argument that exploits~\ref{eq:N}.

\subsection*{Organisation}
Further background that provides additional motivation for the questions studied in this paper is provided in Section~\ref{sec:background}. We prove Theorem~\ref{thm:CNA} in Section~\ref{sec:proof-CNA}. We then use Theorem~\ref{thm:CNA} to prove Theorem~\ref{thm:LC} in Section~\ref{sec:proof-LC}. Theorem~\ref{thm:ULC} and Corollary~\ref{cor:unimodality} are then deduced from Theorem~\ref{thm:LC} in Section~\ref{sec:proof-ULC}. In Section~\ref{sec:counter-example}, we demonstrate that Theorem~\ref{thm:LC} requires the normality assumption, and that the assumptions in Theorem~\ref{thm:CNA} are necessary as well. Finally, in Section~\ref{sec:sto-conv}, we discuss several problems, results, and empirical findings on related topics.

\section{Background}\label{sec:background}

In this section, we provide some background and historical context for the various problems discussed in Section~\ref{s:intro}.

\subsection*{Around Radon's theorem}
Recall that Radon's theorem states that any set of $n$ points in $\rr^{d}$, where $n\ge d+2$, can be partitioned into two sets whose convex hulls intersect. This theorem was first proved by Radon~\citep{Rad} in 1921, and it has since become a cornerstone of convex geometry. Radon's theorem was used by Radon to give a proof of Helly's theorem that asserts that if a finite family of convex sets in $\rr^{d}$ (with at least $d+1$ members) has the property that every $d+1$ sets in the family have a point in common, then there is a point in common to all sets in the family. In 1966, Tverberg~\citep{Tve66} published a far-reaching extension of Radon's theorem conjectured a few years earlier by Birch.

\begin{thm}\label {thm:Tve}
	Any $(r-1)(d+1)+1$ points in $\mathbb R^d$ can be partitioned into $r$ sets whose convex hulls intersect.
\end{thm}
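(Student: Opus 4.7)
The plan is to deduce Tverberg's theorem from B\'ar\'any's colorful Carath\'eodory theorem via Sarkaria's tensor-product trick. Recall that the colorful Carath\'eodory theorem asserts that if $S_1, \ldots, S_{D+1}$ are subsets of $\mathbb{R}^D$ each of whose convex hulls contains the origin, then one can pick a single point from each $S_i$ so that the origin still lies in the convex hull of the chosen points.

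The setup proceeds in three steps. First, I would lift each input point $v_i \in \mathbb{R}^d$ to $\tilde v_i = (v_i, 1) \in \mathbb{R}^{d+1}$, thereby converting affine combinations of the $v_i$ into linear combinations of the $\tilde v_i$. Second, I would choose vectors $w_1, \ldots, w_r \in \mathbb{R}^{r-1}$ summing to zero and such that any $r-1$ of them are linearly independent --- for instance, the vertices of a regular simplex centered at the origin --- so that the only linear relation among them (up to scalar) is $\sum_{j=1}^r w_j = 0$. Third, for each $i \in [n]$ with $n = (r-1)(d+1)+1$, I would form the color class
\[
S_i = \{\tilde v_i \otimes w_1, \ldots, \tilde v_i \otimes w_r\} \subset \mathbb{R}^{(d+1)(r-1)}.
\]
The identity $\sum_j w_j = 0$ then guarantees that the origin lies in the convex hull of each $S_i$. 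Since the ambient dimension $(d+1)(r-1)$ equals $n-1$, the colorful Carath\'eodory theorem produces indices $j(i) \in [r]$ and nonnegative weights $\lambda_i$ summing to $1$ such that $\sum_{i=1}^n \lambda_i\, \tilde v_i \otimes w_{j(i)} = 0$.

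To extract the Tverberg partition, I would set $A_j = \{i : j(i) = j\}$ and $u_j = \sum_{i \in A_j} \lambda_i \tilde v_i \in \mathbb{R}^{d+1}$, so that the tensor equation becomes $\sum_{j=1}^r u_j \otimes w_j = 0$. Viewing this as a matrix identity $\sum_j u_j w_j^{T} = 0$ and applying the chosen property of the $w_j$ coordinate-by-coordinate, each $u_j$ is forced to equal a common vector $u \in \mathbb{R}^{d+1}$. Reading off the last coordinate of $u$ (using $(\tilde v_i)_{d+1} = 1$) yields $\sum_{i \in A_j} \lambda_i = 1/r$ for every $j$; in particular, every $A_j$ is nonempty, and rescaling $\lambda_i$ by $r$ within each block produces genuine convex combinations of the $v_i$ that all equal the common point consisting of the first $d$ coordinates of $r u \in \mathbb{R}^d$.

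The main obstacle is the colorful Carath\'eodory theorem itself, whose proof requires a separate nontrivial combinatorial argument (typically a sequence of ``swap'' moves along an edge of the current colorful simplex). Once that is granted, Sarkaria's reduction is a clean piece of linear algebra; the only subtlety in the decoding step is the structural fact about the $w_j$ --- that the coefficient vector in any vanishing combination must be constant --- which is ultimately what forces all parts of the partition to have equal total weight $1/r$, and hence to be nonempty.
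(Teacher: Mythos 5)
Your proposal is correct, but note that the paper does not prove this statement at all: Theorem~\ref{thm:Tve} is quoted as classical background, with a citation to Tverberg's original 1966 paper, whose proof was a rather intricate elimination/moving-points argument. What you give is the now-standard Sarkaria reduction (as streamlined by B\'ar\'any and Onn), and the details check out: the lift $\tilde v_i=(v_i,1)$, the choice of $w_1,\dots,w_r\in\rr^{r-1}$ whose only linear relation is $\sum_j w_j=0$, the dimension count $(d+1)(r-1)=n-1$ so that the $n$ color classes match the hypothesis of colorful Carath\'eodory, and the decoding step in which $\sum_j u_j w_j^{T}=0$ read row-by-row forces $u_1=\dots=u_r$, whence each block carries total weight exactly $1/r$, is nonempty, and rescaling by $r$ exhibits a common point of the $r$ convex hulls. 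The one external ingredient is the colorful Carath\'eodory theorem, which you correctly flag as requiring its own (standard, swap-based) proof; granting it, your argument is complete. Compared with the route the paper implicitly points to, your approach buys a short, purely linear-algebraic proof at the cost of importing colorful Carath\'eodory, whereas Tverberg's original argument is self-contained but considerably more delicate; for the purposes of this paper either is fine, since the theorem is only used as motivation for the questions about counting Tverberg partitions in Section~\ref{sec:sto-conv}.
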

Any partition that satisfies the conditions of Theorem~\ref{thm:Tve} is referred to as a \defnb{Tverberg partition of order $r$}. It would be interesting to investigate analogues of our results for Tverberg partitions in place of Radon partitions, a topic about which we will say more later in Section~\ref{sec:sto-conv}. For more background on these results, see~\citep{Bar21,Eck79,Eck93,Rou01,BS18,BK22}.

\subsection*{Around Sylvester's four-point question}
In 1864, Sylvester asked for the probability that four random points in the plane are in convex position~\citep{Syl64}. He proposed that the answer is 1/4 if the points are ``taken at random in an indefinite plane", and also asked what happens when the points are taken at random from a set $K \subset \rr^2$. When the points are taken at random from the Gaussian distribution, then this probability is equal to $p_2^{(4)}$ (as defined in~\eqref{eq:p_k}), and understanding the distribution of the Radon partition of $n$ random points in $\mathbb R^{n-2}$ is a natural higher-dimensional extension of Sylvester's problem. For more on Sylvester's four-point problem, involving extensions both to more points and to higher dimensions, see~\citep{Syl64,Pfi89,Bar99,Val95}.

For four Gaussian points in the plane, Sylvester's problem has a nice answer going back to Maehara~\citep{Mae78} (see also~\citep{Bla08}).
\begin{thm}
The probability that four random Gaussian points in $\mathbb R^2$ are in convex position is
\[\frac {6}{\pi}\arcsin (\frac {1}{3} )\approx 0.649 .\]
\end {thm}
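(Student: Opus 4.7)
The plan is to reduce the problem to Sheppard's positive-quadrant formula for correlated Gaussians, via the Radon-partition structure of four planar points. A set of four points in $\mathbb{R}^2$ in general position is in convex position if and only if its (unique) Radon partition has sizes $\{2,2\}$, equivalently, if and only if no point lies in the triangle spanned by the other three. In the non-convex case exactly one point lies strictly inside, so the events $\{\Xb_i \in \triangle(\{\Xb_j\}_{j\ne i})\}$ are almost surely disjoint, and exchangeability of the four i.i.d.\ Gaussian points gives
\[
\Pb[\text{convex}] \;=\; 1 - 4\,\Pb[\Xb_1 \in \triangle(\Xb_2,\Xb_3,\Xb_4)].
\]
Thus it suffices to compute $p := \Pb[\Xb_1 \in \triangle(\Xb_2,\Xb_3,\Xb_4)]$.

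Setting $Z_i := \Xb_{i+1} - \Xb_1$ for $i=1,2,3$, the vectors $Z_1,Z_2,Z_3$ are centered, jointly Gaussian in $\mathbb{R}^2$ with $\mathrm{Var}(Z_i) = 2I$ and $\Cov(Z_i,Z_j) = I$ for $i \ne j$, so they are exchangeable, rotationally symmetric, and pairwise cross-correlated with correlation $1/2$. Now $p = \Pb[0 \in \operatorname{conv}(Z_1,Z_2,Z_3)]$, and in $\mathbb{R}^2$ this event is almost surely the complement of the event that $Z_1,Z_2,Z_3$ all lie in some common open half-plane through the origin. A Wendel-type observation---that among three generic vectors lying in a common open half-plane, exactly one of them is ``leftmost'' in the sense that the other two lie in its open half-plane $H_v := \{w : \det(v,w) > 0\}$---combined with exchangeability yields
\[
\Pb\bigl[Z_1,Z_2,Z_3 \text{ lie in a common open half-plane}\bigr] \;=\; 3\,\Pb[Z_2, Z_3 \in H_{Z_1}].
\]

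The final step is to evaluate $\Pb[Z_2, Z_3 \in H_{Z_1}]$ by conditioning on $Z_1$. By rotational symmetry of the joint law, the answer is independent of the direction of $Z_1$, and in a frame in which $Z_1$ lies along the positive $x$-axis, the event becomes $\{Z_{2,y}>0,\; Z_{3,y}>0\}$. A routine conditional-Gaussian computation gives $\mathrm{Var}(Z_i \mid Z_1) = \tfrac{3}{2}I$ and $\Cov(Z_2,Z_3 \mid Z_1) = \tfrac{1}{2}I$, so the second coordinates $Z_{2,y}$ and $Z_{3,y}$ are conditionally centered jointly Gaussian with correlation $1/3$, independent of $Z_1$. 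Sheppard's formula then delivers
\[
\Pb[Z_2, Z_3 \in H_{Z_1}] \;=\; \tfrac{1}{4} + \tfrac{1}{2\pi}\arcsin\!\bigl(\tfrac{1}{3}\bigr),
\]
and assembling the pieces yields $p = \tfrac{1}{4} - \tfrac{3}{2\pi}\arcsin(\tfrac{1}{3})$, hence $\Pb[\text{convex}] = \tfrac{6}{\pi}\arcsin(\tfrac{1}{3})$, as claimed.

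The main conceptual step is the Wendel-type decomposition, which reduces the three-vector ``surrounding the origin'' event to a two-vector event that Sheppard's formula can handle; everything else is routine Gaussian bookkeeping. Quantitatively, the crux is the identity $\Cov(Z_2, Z_3 \mid Z_1)\cdot \mathrm{Var}(Z_i \mid Z_1)^{-1} = \tfrac{1}{3} I$, which is precisely where the $\arcsin(1/3)$ in the final answer originates.
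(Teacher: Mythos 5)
Your proof is correct, and I verified the computations: $\mathrm{Var}(Z_i)=2I$, $\Cov(Z_i,Z_j)=I$, the conditional blocks $\tfrac32 I$ and $\tfrac12 I$, the conditional correlation $1/3$ of the components normal to $Z_1$, and the final assembly $1-4\bigl(\tfrac14-\tfrac{3}{2\pi}\arcsin\tfrac13\bigr)=\tfrac{6}{\pi}\arcsin\tfrac13$ all check out. Note, however, that the paper does not prove this statement at all: it records the value as Maehara's theorem, citing \citep{Mae78} (see also \citep{Bla08}), so your argument is necessarily a different route, and it is a nice self-contained one. The route suggested by the paper's own machinery would instead go through Lemma~\ref{lem:iso}: the probability of convex position is $p_2^{(4)}=\binom{4}{2}q_2^{(4)}$, where $q_2^{(4)}=\Pb[Y_1,Y_2\le \bY\le Y_3,Y_4]$ is a four-dimensional Gaussian orthant probability for the variables $\bY-Y_i$ (variances $3/4$, covariances $-1/4$, hence pairwise correlations $\pm 1/3$ after sign flips), which is essentially the classical computation. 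Your Wendel-style angular decomposition --- exactly one of three vectors lying in a common open half-plane through the origin has the other two on its counterclockwise side --- reduces that four-dimensional orthant problem to a two-dimensional one, where Sheppard's formula $\tfrac14+\tfrac{1}{2\pi}\arcsin\rho$ applies directly; together with the observation that the conditional covariance blocks are scalar multiples of $I$ (so the conditional probability does not depend on $Z_1$), this is what keeps the argument short. The only step you state without proof is the Wendel-type claim itself (including disjointness of the three ``leftmost'' events and the fact that each implies the common half-plane event almost surely), but these are routine angular-order verifications, not gaps.
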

The study of Radon partitions of a set of higher-dimensional Gaussian vectors is a natural extension of Sylvester's question that was recently studied by Frick, Newman, and Pegden~\citep{FNP25} and by White~\citep{White}. For more historical context and relevant background, see White's thesis~\citep{White} and the references therein.

\subsection*{Around unimodality and log-concavity} Unimodality is a very natural property that mathematicians perceive in a number of places, leading to a surge of conjectures about settings where this property may arise. In trying to demonstrate unimodality, it is quite natural to (attempt to) demonstrate log-concavity instead, since it is the more robust of the two properties. For instance, the convolution of two log-concave sequences remains log-concave, whereas unimodality is not preserved under such operations; see~\citep{Lig}, for example. In this vein, our proof (Corollary~\ref{cor:unimodality}) of White's conjecture through Theorem~\ref{thm:ULC} adheres to this general approach.

We point the reader to the surveys~\citep{Bre1,Bre2,Sta} for an overview of classical unimodality and
log-concavity results. It is worth mentioning that more recently, Huh and his collaborators have advanced an algebraic-geometric approach that has resolved a number of longstanding conjectures about log-concavity; see~\cite{Huh} for a survey,
and~\cite{CP} for a more combinatorial perspective on this direction. However, our proof of Theorem~\ref{thm:ULC} relies on a different approach, and is inspired by the work such as~\cite{Pem,BBL,KN} that link log-concavity with negative association.

\subsection*{Around central limit theorems}\label{ss:mountain} One consequence of Theorem~\ref{thm:ULC} is that the scaling limit of the distributions $\mathbf{p}_n = ( p_0, \dots, p_n )$ as $n \to \infty$, if it exists, must be a log-concave distribution. It is therefore natural to inquire what this limiting distribution is; this is addressed by the following central limit theorem.

\begin{thm}[Central limit theorem]\label{thm:CLT}
	As $n \to \infty$, we have
	\[ \frac{S_n- n/2}{\sigma \sqrt{n}} \xrightarrow[\mathrm{dist}]{} \mathcal{N}(0,1),\]
	where $S_n$ is sampled from $\mathbf{p}_n$, and $\sigma^2 = {1}/{4}-{1}/{(2\pi)}$.
\end{thm}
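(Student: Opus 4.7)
The plan is to reduce the CLT for $S_n$ to a CLT for a ``Youden-demon count statistic'' built from i.i.d.\ standard normals, and then to analyze the latter via an empirical-process decomposition. First, I would invoke the reduction underlying Theorem~\ref{thm:ULC}: Lemma~\ref{lem:iso} identifies $p_k^{(n)}$ with $\binom{n}{k} q_k^{(n)}$, so $S_n$ has the same distribution as $N_n := \#\{i \in [n] : Y_i \le \bY\}$, where $Y_1, \ldots, Y_n$ are i.i.d.\ standard normal with sample mean $\bY$. It therefore suffices to prove the stated CLT for $N_n$.

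Next, writing $N_n = n F_n(\bY)$ with $F_n(t) := n^{-1}\sum_{i=1}^n \one[Y_i \le t]$ the empirical CDF, and letting $\Phi, \varphi$ denote the standard normal CDF and pdf, I would decompose
\[
\sqrt{n}\!\left(\tfrac{N_n}{n} - \tfrac{1}{2}\right) = \sqrt{n}\bigl(F_n(0) - \tfrac{1}{2}\bigr) + \sqrt{n}\bigl(\Phi(\bY) - \tfrac{1}{2}\bigr) + R_n,
\]
where $R_n := \sqrt{n}\bigl(F_n(\bY) - F_n(0) - \Phi(\bY) + \tfrac{1}{2}\bigr)$. The ordinary CLT yields $\sqrt{n}\bY = O_P(1)$, while a first-order Taylor expansion gives $\sqrt{n}\bigl(\Phi(\bY) - \tfrac{1}{2}\bigr) = \varphi(0)\sqrt{n}\bY + o_P(1)$. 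Granting for the moment that $R_n = o_P(1)$, what remains is the i.i.d.\ linear statistic
\[
\frac{1}{\sqrt{n}}\sum_{i=1}^n \Bigl(\one[Y_i \le 0] - \tfrac{1}{2} + \tfrac{1}{\sqrt{2\pi}}\,Y_i\Bigr),
\]
to which the classical Lindeberg--L\'evy CLT applies. Using $\Eb[Y_1\one[Y_1 \le 0]] = -\varphi(0) = -1/\sqrt{2\pi}$, the limiting variance works out to $\tfrac{1}{4} + \tfrac{1}{2\pi} - \tfrac{1}{\pi} = \tfrac{1}{4} - \tfrac{1}{2\pi} = \sigma^2$, as required.

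The main obstacle is the estimate $R_n = o_P(1)$: this is a stochastic equicontinuity statement for the empirical process $t \mapsto \sqrt{n}(F_n(t) - \Phi(t))$ at $t = 0$, on scales of order $n^{-1/2}$. A clean route is a second-moment bound: for small $|t|$, $\Eb\bigl[(F_n(t) - F_n(0) - \Phi(t) + 1/2)^2\bigr] = O(|t|/n)$, and combining this with the monotonicity of $F_n$ one obtains uniform control on $|t| \le C/\sqrt{n}$ for any fixed $C$; equivalently, Donsker's theorem for the standard normal distribution applies. Either way, this is the step that couples the randomness of $F_n$ to the randomness of $\bY$, and is the substantive content of the argument.
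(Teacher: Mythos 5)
Your argument is correct, but it follows a genuinely different route from the paper. After the common first step (Lemma~\ref{lem:iso}, which lets you replace $S_n$ by $N_n=\#\{i: Y_i\le \bY\}$), you proceed via the classical Bahadur-type linearization: decompose $\sqrt{n}\bigl(N_n/n-\tfrac12\bigr)$ into the empirical-CDF term $\sqrt{n}(F_n(0)-\tfrac12)$, the smooth term $\varphi(0)\sqrt{n}\,\bY$, and a remainder controlled by stochastic equicontinuity of the empirical process at scale $n^{-1/2}$, and then apply the Lindeberg--L\'evy CLT to the influence-function sum $n^{-1/2}\sum_i\bigl(\one[Y_i\le 0]-\tfrac12+Y_i/\sqrt{2\pi}\bigr)$; your variance bookkeeping $\tfrac14+\tfrac{1}{2\pi}-\tfrac1\pi=\sigma^2$ is right, and the second-moment bound $O(|t|/n)$ plus monotonicity does suffice for $R_n=o_P(1)$ once you restrict to $|\bY|\le C/\sqrt{n}$ with high probability. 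This is essentially the quantile/Bahadur-representation route that the paper explicitly mentions (via the Dmitrienko--Govindarajulu CLT for Youden's demon) but deliberately avoids. The paper instead gives a self-contained second-moment (method of moments) proof: with $D_i=2\cdot\mathbbm{1}\{Y_i\le\bY\}-1$, it computes $\Eb[D_1\cdots D_\ell]$ exactly up to $O(n^{-\ell/2-1})$ by expanding a Gaussian integral using the explicit covariance structure of $Z_i=\bY-Y_i$, and shows that all moments of $n^{-1/2}\sum_i D_i$ converge to those of $\mathcal N(0,1-2/\pi)$. Your approach is shorter if one is willing to import standard empirical-process facts (Donsker/oscillation control) and it makes the source of the variance $\tfrac14-\tfrac1{2\pi}$ transparent through the asymptotically linear representation; the paper's approach trades that for elementary, fully self-contained Gaussian computations and, as a byproduct, gives quantitative control on every moment rather than only the distributional limit.
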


It is possible to prove Theorem~\ref{thm:CLT} by reducing the problem to an analogous central limit theorem for Youden's demon, a version of which was established by Dmitrienko and Govindarajulu~\citep[Thm3.1]{DmiGov97} using the Bahadur representation of quantiles (see, e.g., \citep{Ghosh}).
To keep the exposition self-contained, we include a more direct proof of Theorem \ref{thm:CLT} in Appendix \ref{sec:proof-CLT}, using the second-moment method.


\section{Conditional associations}\label{sec:proof-CNA}
Our goal in this section is to prove Theorem~\ref{thm:CNA}. We start with the following standard lemma; we include the proof for completeness.

\begin{lemma}\label{lem:coupling}
	Let $W$ be a real-valued random variable, and let $[a,b]$ and $[c,d]$ be intervals of $\rr$ such that $a<c$ and $b<d$.
	Let $U$ and $V$ be random variables such that
	\begin{align}\label{eq:coupling-0}
		\begin{split}
			 & \text{$U$ is equal in distribution to $W$ conditioned on $W \in [a,b]$, and} \\
			 & \text{$V$ is equal in distribution to $W$ conditioned on $W \in [c,d]$.}
		\end{split}
	\end{align}
	Then there exists a coupling of $U$ and $V$ such that $U\leq V$. That is, there exists a probability space $\Omega$, a probability measure $\mu: \Omega \to [0,1]$, and measurable functions $U,V: \Omega \to \rr $ such that $U$ and $V$ satisfy~\eqref{eq:coupling-0} and $U(\omega) \leq V(\omega)$ for all $\omega \in \Omega$.
\end{lemma}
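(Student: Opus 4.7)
The plan is to reduce the lemma to the classical fact that stochastic domination yields a monotone coupling. Concretely, I will first show that $V$ stochastically dominates $U$, meaning $\Pb[U\ge t]\le \Pb[V\ge t]$ for every $t\in\rr$, and then obtain the desired coupling on $\Omega=[0,1]$ (with Lebesgue measure) by setting $U(\omega)=F_U^{-1}(\omega)$ and $V(\omega)=F_V^{-1}(\omega)$, where $F_U^{-1}$ and $F_V^{-1}$ are the right-continuous quantile functions of $U$ and $V$.  Monotonicity of these quantile functions, together with the inequality $F_V(t)\le F_U(t)$ equivalent to stochastic domination, guarantees $U(\omega)\le V(\omega)$ pointwise on $\Omega$.

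To verify stochastic domination, let $\mu$ denote the law of $W$ and split into cases on $t$.  If $t\le c$ or $t>b$ the inequality is immediate: in the first case $\Pb[V\ge t]=1$; in the second case $\Pb[U\ge t]=0$ because $t>b$ forces $W\notin[a,b]\cap[t,\infty)$.  The only nontrivial range is $c<t\le b$, where the claim becomes
\[
\frac{\mu([t,b])}{\mu([a,b])}\;\le\;\frac{\mu([t,d])}{\mu([c,d])}.
\]
Writing $\alpha=\mu([a,c))$, $\beta=\mu([c,t))$, $\gamma=\mu([t,b])$, and $\delta=\mu((b,d])$, we have $\mu([a,b])=\alpha+\beta+\gamma$, $\mu([c,d])=\beta+\gamma+\delta$, $\mu([t,d])=\gamma+\delta$, so the inequality reduces to
\[
\gamma(\beta+\gamma+\delta)\;\le\;(\gamma+\delta)(\alpha+\beta+\gamma),
\]
and expanding both sides shows the difference equals $\alpha\gamma+\alpha\delta+\beta\delta\ge 0$.  (If $\mu([a,b])=0$ or $\mu([c,d])=0$ the conditionings in the statement are vacuous, so we may assume both denominators are positive.)

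The main obstacle is really just bookkeeping: one must be careful with atoms at the endpoints $a,b,c,d,t$ so that the decomposition of $\mu([a,b])$, $\mu([c,d])$, $\mu([t,b])$, and $\mu([t,d])$ into the pieces $\alpha,\beta,\gamma,\delta$ is genuinely a disjoint decomposition.  Using half-open intervals as above (and handling the boundary cases $t=c$ and $t=b$ separately, which are immediate) takes care of this uniformly.  Once stochastic domination is in hand, the quantile coupling in the first paragraph finishes the proof.
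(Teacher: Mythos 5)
Your proof is correct, and it follows the same overall strategy as the paper --- reduce the lemma to the stochastic domination statement $\Pb[U\ge t]\le\Pb[V\ge t]$ for all $t$ --- but you verify that domination by a genuinely different argument. The paper chains through the intermediate interval $[c,b]$: it first compares conditioning on $[a,b]$ with conditioning on $[c,b]$ via the (trivially true) positive association of the nested events $\{W'\ge s\}$ and $\{W'\ge c\}$, and then compares $[c,b]$ with $[c,d]$ via the corresponding negative association with $\{W''\le b\}$. You instead decompose the law of $W$ into the four disjoint pieces $\alpha=\mu([a,c))$, $\beta=\mu([c,t))$, $\gamma=\mu([t,b])$, $\delta=\mu((b,d])$ and check the single inequality $\gamma(\beta+\gamma+\delta)\le(\gamma+\delta)(\alpha+\beta+\gamma)$ by expansion; this is a correct bare-hands computation (the half-open intervals do give a disjoint decomposition, atoms included, and your boundary cases $t\le c$, $t>b$ cover everything when $b\le c$ as well). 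The two routes are equivalent in content --- your algebraic identity is exactly what the paper's two association inequalities multiply out to --- but your version is more self-contained arithmetic, while the paper's is softer and avoids bookkeeping with the measure. You also spell out the step the paper leaves implicit, namely that stochastic domination yields a pointwise monotone coupling via the quantile transform on $[0,1]$ with Lebesgue measure; that is a standard fact and your invocation of it is fine.
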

\begin{proof}
	The lemma is straightforward if $b\leq c$, so we assume that $b>c$. It suffices to show, for all $s \in \rr$, that
	\[
		\Pb\left[ W \geq s \mid W \in [a,b] \right] \leq \Pb\left[ W \geq s \mid W \in [c,d] \right].
	\]

	Let $W'$ be the random variable $W$ conditioned on $W \in [a,b]$. Notice that the events $\{W' \geq s \}$ and $\{W' \geq c \}$ are positively associated. Indeed, we clearly have
	\begin{align}\label{eq:coupling-1.5}
		\Pb\left[ W' \geq s, W' \geq c \right] = \Pb\left[ W' \geq \max \{s,c\} \right] \geq \Pb\left[ W' \geq s \right] \Pb\left[ W' \geq c \right].
	\end{align}
	This implies that
	\begin{equation}\label{eq:coupling-1}
		\begin{split}
			\Pb\left[ W \geq s \mid W \in [c,b] \right] & = \Pb\left[ W' \geq s \mid W' \geq c\right]                                      \\
			                                            & \geq \Pb\left [ W' \geq s \right] = \Pb\left[ W \geq s \mid W \in [a,b] \right].
		\end{split}
	\end{equation}
	Now let $W''$ be the random variable $W$ conditioned on $W \in [c,d]$. As before, the events $\{W'' \geq s \}$ and $\{W''> b \}$ are positively associated, whence the events $\{W'' \geq s \}$ and $\{W'' \leq b \}$ are negatively associated. It then follows that
	\begin{equation}\label{eq:coupling-2}
		\begin{split}
			\Pb\left[ W \geq s \mid W \in [c,b] \right] & = \Pb\left[ W'' \geq s \mid W'' \leq b \right]                                   \\
			                                            & \leq \Pb\left[ W'' \geq s \right] = \Pb\left[ W \geq s \mid W \in [c,d] \right].
		\end{split}
	\end{equation}
	The lemma now follows by combining~\eqref{eq:coupling-1} and~\eqref{eq:coupling-2}.
\end{proof}

Let 
 $Z_1,\dots, Z_m$  be a repulsive-cooperative Gaussian ensemble (as defined by \ref{eq:N} and \ref{eq:P})  with sample mean $\bZ$.
 Let $f:[-\infty,\infty]^{2m+1} \to \rr$ be the function defined by
\begin{equation}\label{eq:deff}
	f(s, a_1,b_1,\dots, a_m,b_m) = \Pb\left[ \bZ \geq -s \, \Big \vert \, \bigcap_{i \in [m]} \{Z_i \in [a_i,b_i]\} \right].
\end{equation}
We shall use Lemma~\ref{lem:coupling} to prove the following lemma that will in turn imply Theorem~\ref{thm:CNA}.

\begin{lemma}\label{lem:inc}
	Let $Z_1,\dots, Z_{m}$ be a repulsive–cooperative Gaussian ensemble. Then $f$ is an increasing function, i.e.
	\[ f(s,a_1,b_1,\dots, a_m, b_m) \leq f(t,c_1,d_1,\dots, c_m, d_m)\]
	whenever $s \leq t$, and $a_i \leq c_i$ and $b_i \leq d_i$ for all $i \in [m]$.
\end{lemma}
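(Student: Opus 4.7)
The plan is to prove the lemma by induction on $m$, the size of the ensemble. Monotonicity of $f$ in $s$ is immediate. By relabeling, monotonicity in $(a_i, b_i)$ reduces to the case $i=1$; writing $E_1 = \bigcap_{j \neq 1}\{Z_j \in [a_j, b_j]\}$ and conditioning on $Z_1$, we have
\[
    f = \frac{\int_{a_1}^{b_1} g(t)\, w(t)\, dt}{\int_{a_1}^{b_1} w(t)\, dt},
    \qquad
    g(t) := \Pb[\bZ \geq -s \mid Z_1 = t, E_1],
\]
with $w(t) = \Pb[E_1 \mid Z_1 = t]\, f_{Z_1}(t)$. Splitting the integration interval at the endpoint being varied --- say $[a_1, b_1'] = [a_1, b_1] \cup [b_1, b_1']$ for $b_1 \le b_1'$ --- one checks that if $g$ is non-decreasing, then the weighted average of $g$ on $[b_1, b_1']$ is at least $g(b_1)$, which in turn dominates the weighted average on $[a_1, b_1]$; an analogous split handles variation of $a_1$. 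The problem therefore reduces to proving $g$ is non-decreasing in $t$.

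The base case $m = 1$ is trivial since $\bZ = Z_1$ gives $g(t) = \one\{t \geq -s\}$. For the inductive step, I condition on $Z_1 = t$: then $(Z_2, \ldots, Z_m)$ is jointly Gaussian with conditional mean $(\beta_2 t, \ldots, \beta_m t)$, where $\beta_j := \Eb[Z_j Z_1]/\Eb[Z_1^2] \leq 0$ by property~(A), and with conditional covariance
\[
    \Sigma'_{ij} = \Eb[Z_i Z_j] - \frac{\Eb[Z_i Z_1]\, \Eb[Z_1 Z_j]}{\Eb[Z_1^2]}
\]
independent of $t$. Setting $W_j := Z_j - \beta_j t$, the vector $(W_2, \ldots, W_m)$ is (under this conditioning) a centered Gaussian random vector with covariance $\Sigma'$, whose law does not depend on $t$.

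The central step is to verify that $(W_2, \ldots, W_m)$ is itself a repulsive--cooperative Gaussian ensemble of size $m - 1$, so that the inductive hypothesis applies. Property~(A) is immediate since $\Sigma'_{ij} \leq 0$: indeed $\Eb[Z_i Z_j] \leq 0$ and $\Eb[Z_i Z_1]\Eb[Z_1 Z_j] \geq 0$. Property~(B) follows after rearrangement from
\[
    \sum_{j \geq 2} \Sigma'_{ij} = \sum_{j=1}^m \Eb[Z_i Z_j] - \frac{\Eb[Z_i Z_1]}{\Eb[Z_1^2]} \sum_{j=1}^m \Eb[Z_1 Z_j] \geq 0,
\]
where both summands on the right are non-negative: the first by property~(B) of $Z$ at index $i$, the second by property~(B) at index $1$ combined with $\Eb[Z_i Z_1] \leq 0$.

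It remains to rewrite $g(t)$ in $W$-variables. Setting $B := \sum_{j \ge 2}\beta_j$ and $\bar W := \sum_{j \ge 2}W_j/(m-1)$, we have $\bZ = ((1+B) t + (m-1)\bar W)/m$. Property~(B) at index $1$ reads $1 + B \geq 0$, so $\bZ \geq -s$ is equivalent to $\bar W \geq -s'(t)$ with $s'(t) := (sm + (1+B)t)/(m-1)$ non-decreasing in $t$; and $E_1$ becomes $W_j \in [a_j - \beta_j t, b_j - \beta_j t]$ for each $j \geq 2$, with both endpoints non-decreasing in $t$. Therefore $g(t) = f_W(s'(t), a_2 - \beta_2 t, b_2 - \beta_2 t, \ldots)$, where $f_W$ is the analogous function for the $W$-ensemble; the inductive hypothesis gives $f_W$ non-decreasing in each argument, and since all its arguments are non-decreasing in $t$, so is $g$. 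The main obstacle is precisely the preservation of the repulsive--cooperative structure under Gaussian conditioning on a single coordinate --- this is the step where both hypotheses~(A) and~(B) are simultaneously needed.
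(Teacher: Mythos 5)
Your proof is correct and follows essentially the same route as the paper's: induction on $m$, eliminating a distinguished coordinate (you by conditioning on $Z_1=t$, the paper by passing to the residuals $Z_i'=Z_i+\alpha_i Z_m$, which is the same Gaussian decomposition), verifying via the identical Schur-complement computation that the reduced ensemble is again repulsive--cooperative, and using \ref{eq:N} and \ref{eq:P} to make every argument of the $(m-1)$-variable function monotone in the distinguished coordinate. Your weighted-average splitting step is just an unpacked form of the paper's stochastic-domination/coupling step (Lemma~\ref{lem:coupling}), so the two arguments coincide in substance.
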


\begin{proof}
	We prove the claim by induction on $m$. First, let $m=1$, and let $s$, $t$, $a_1$, $b_1$, $c_1$ and $d_1$ be real numbers satisfying $s \leq t$, $a_1\leq c_1$ and $b_1 \leq d_1$. Let $U$ be the random variable $Z_1$ conditioned on $Z_1 \in [a_1,b_1]$, and let $V$ be the random variable $Z_1$ conditioned on $Z_1 \in [c_1,d_1]$. We then have
	\[
		f(s,a_1,b_1) = \Pb\left[ U \geq -s \right] \leq \Pb\left[ V \geq -s \right] \leq 	\Pb\left[ V \geq -t \right] = f(t,c_1,d_1),
	\]
	where the first inequality follows from Lemma~\ref{lem:coupling}.

	Now, let $m \geq 2$. It is clear that $f$ is increasing in the first variable $s$. By symmetry, it therefore suffices to show that
	\[ f(s,a_1,b_1,\dots, a_{m-1}, b_{m-1}, {\color{blue}{a_m}}, {\color{blue}{b_m}}) \leq f(s,a_1,b_1,\dots, a_{m-1}, b_{m-1}, {\color{blue}{c_m}}, {\color{blue}{d_m}}),\]
	whenever $a_m \leq c_m$ and $b_m \leq d_m$.

	Let $Z_1',\dots, Z_{m-1}'$ be jointly normal random variables given by
	\begin{equation}\label{eq:Y'}
		Z_i' = Z_i + \alpha_i Z_m,
	\end{equation}
	where $\alpha_i$ is the nonnegative real number given by
	\[ \alpha_i = - \frac{\Eb[Z_i Z_m]}{\Eb[Z_m^2]}; \]
	note that $\alpha_i$ is nonnegative because $Z_1,\dots, Z_m$ satisfy~\ref{eq:N}. It follows that $\Eb[Z_i' Z_m]=0$ for $i\in [m-1]$, so $Z_1',\dots, Z'_{m-1}$ are independent of $Z_m$. Also, note that the sample mean $\overline{Z'}$ of $Z_{1}',\dots, Z_{m-1}'$ satisfies
	\begin{equation}\label{eq:bY'}
		\overline{Z'} = \tfrac{m}{m-1} \overline{Z} - \tfrac{m}{m-1} \beta Z_m,
	\end{equation}
	where $\beta$ is the real number
	\[ \beta = \frac{\Eb[Z_m(Z_1+\dots+Z_m)]}{m \Eb[Z_m^2]}, \]
	which is again nonnegative because $Z_1,\dots, Z_m$ satisfy~\ref{eq:P}.

	Now, for distinct $i,j \in [m-1]$, we have
	\begin{align*}
		\Eb[Z_i' Z_j'] & = \Eb[Z_iZ_j] + \alpha_i \Eb[Z_j Z_m] + \alpha_j \Eb[Z_i Z_m] + \alpha_i \alpha_j \Eb[Z_m^2] \\
		               & = \Eb[Z_iZ_j] - \frac{\Eb[Z_i Z_m] \Eb[Z_j Z_m]}{\Eb[Z_m^2]} \leq 0,
	\end{align*}
	where the last inequality is because $Z_1,\dots, Z_m$ satisfy~\ref{eq:N}. Hence $Z_1',\dots, Z_{m-1}'$ satisfy~\ref{eq:N}. Next, for $i \in [m-1]$, we have
	\begin{align*}
		\Eb[Z_i'(Z_1'+\dots+Z_{m-1}')] & = \Eb[Z_i'(Z_1+\dots+Z_{m-1})]                                                             \\
		                               & = \Eb[Z_i(Z_1+\dots+Z_{m-1})] - \frac{\Eb[Z_iZ_m] \Eb[Z_m(Z_1+\dots+Z_{m-1})]}{\Eb[Z_m^2]} \\
		                               & = \Eb[Z_i(Z_1+\dots+Z_m)] - \frac{\Eb[Z_iZ_m] \Eb[Z_m(Z_1+\dots+Z_m)]}{\Eb[Z_m^2]}         \\
		                               & \geq 0,
	\end{align*}
	where the last inequality holds because $Z_1,\dots, Z_m$ satisfy~\ref{eq:N} and~\ref{eq:P}. It follows that $Z_1',\dots, Z_{m-1}'$ satisfy~\ref{eq:P} as well, so $Z_1',\dots, Z_{m-1}'$ constitute a repulsive-cooperative Gaussian ensemble as well.

	Recall that $f(s,a_1,b_1,\dots, a_{m-1}, b_{m-1}, {\color{blue}{a_m}}, {\color{blue}{b_m}})$ is equal to
	\begin{equation*}
		\Pb\left[ \bZ \geq -s \, \Big \vert \, \bigcap_{i \in [m-1]} \{Z_i \in [a_i,b_i]\} \cap {\color{blue}{\{Z_m \in [a_m,b_m]\} }} \right].
	\end{equation*}
	From~\eqref{eq:Y'} and~\eqref{eq:bY'}, this is equal to
	\begin{equation*}
		\Pb\left[ \bar{Z'} \geq \tfrac{-m}{m-1}(s+\beta Z_m) \, \Big \vert \, \bigcap_{i \in [m-1]} \{Z_i' \in [a_i+\alpha_i Z_m,b_i+\alpha_i Z_m]\} \cap {\color{blue}{\{Z_m \in [a_m,b_m]\} }} \right].
	\end{equation*}
	By the same argument, $f(s,a_1,b_1,\dots, a_{m-1}, b_{m-1}, {\color{blue}{c_m}}, {\color{blue}{d_m}})$ is equal to
	\begin{equation*}
		\Pb\left[ \bar{Z'} \geq \tfrac{-m}{m-1}(s+\beta Z_m) \, \Big \vert \, \bigcap_{i \in [m-1]} \{Z_i' \in [a_i+\alpha_i Z_m,b_i+\alpha_i Z_m]\} \cap {\color{blue}{\{Z_m \in [c_m,d_m]\} }} \right].
	\end{equation*}

	Now, let $W$ be the random variable defined by
	\[ W = Z_m \text{ conditioned on } \bigcap_{i \in [m-1]} \{Z_i' \in [a_i+\alpha_i Z_m,b_i+\alpha_i Z_m]\},\]
	and let $U$ and $V$ be the random variables given by
	\begin{align}\label{eq:coupling-A}
		\begin{split}
			 & \text{$U$ is equal in distribution to $W$ conditioned on $W \in [a_m,b_m]$, and} \\
			 & \text{$V$ is equal in distribution to $W$ conditioned on $W \in [c_m,d_m]$.}
		\end{split}
	\end{align}
	It then follows from Lemma~\ref{lem:coupling} that there exists a coupling of $U$ and $V$ such that $U\leq V$, that is, there exists a probability space $\Omega$, a probability measure $\mu: \Omega \to \rr$, and measurable functions $U,V: \Omega \to \rr$ such that $U,V$ satisfy~\eqref{eq:coupling-A}, and  $U(\omega) \leq V(\omega)$ for all $\omega \in \Omega$. By the law of total probability, we see that $f(s,a_1,b_1,\dots, a_{m-1}, b_{m-1}, {\color{blue}{a_m}}, {\color{blue}{b_m}})$ is equal to
	\begin{align*}
		\int_{\Omega} \Pb\left[ \bar{Z'} \geq \tfrac{-m}{m-1}(s+\beta {\color{blue}{U(\omega)}}) \,\Big \vert \, \bigcap_{i \in [m-1]} \{Z_i' \in [a_i+\alpha_i {\color{blue}{U(\omega)}} ,b_i+\alpha_i {\color{blue}{U(\omega)}}] \} \right] d\mu(\omega).
	\end{align*}
	Now, let $g:[-\infty,\infty]^{2m-1}\to \rr$ be the function defined by
	\[ g(s,a_1,b_1,\dots, a_{m-1}, b_{m-1}) = \Pb\left[ \bar{Z'} \geq -s \, \Big \vert \, \bigcap_{i \in [m-1]} \{Z'_i \in [a_i,b_i]\} \right], \]
	and note that $g$ is an increasing function by the induction hypothesis. It follows that $f(s,a_1,b_1,\dots, a_{m-1}, b_{m-1}, {\color{blue}{a_m}}, {\color{blue}{b_m}})$ is equal to
	\begin{align}\label{eq:beta-1}
		\int_{\Omega}	g\left(\tfrac{m}{m-1}(s+\beta{\color{blue}{U(\omega)}}), (a_i+\alpha_i {\color{blue}{U(\omega)}} ,b_i+\alpha_i {\color{blue}{U(\omega)}})_{i \in [m-1]} \right)  d\mu(\omega).
	\end{align}
	Similarly, $f(s,a_1,b_1,\dots, a_{m-1}, b_{m-1}, {\color{blue}{c_m}}, {\color{blue}{d_m}})$ is equal to
	\begin{align}\label{eq:beta-2}
		\int_{\Omega}	g\left(\tfrac{m}{m-1}(s+\beta{\color{blue}{V(\omega)}}), (a_i+\alpha_i {\color{blue}{V(\omega)}} ,b_i+\alpha_i {\color{blue}{V(\omega)}})_{i \in [m-1]} \right) d\mu(\omega).
	\end{align}
	Since $g$ is an increasing function and $U(\omega) \leq V(\omega)$ by the chosen coupling, we conclude that
	\[ f(s,a_1,b_1,\dots, a_{m-1}, b_{m-1}, {\color{blue}{a_m}}, {\color{blue}{b_m}}) \leq f(s,a_1,b_1,\dots, a_{m-1}, b_{m-1}, {\color{blue}{c_m}}, {\color{blue}{d_m}}),\]
	completing the proof.
\end{proof}

With Lemma~\ref{lem:inc} in hand, we are now ready to prove Theorem~\ref{thm:CNA}.

\begin{proof}[Proof of Theorem~\ref{thm:CNA}] We assume without loss of generality that $i=1$. It suffices to show, for any $s,t \in \rr$, that
	\[ \Pb\left[ \bZ \geq s \,\vert\, \{Z_1 \geq t\} \cap C \right] \geq \Pb\left[ \bZ \geq s \,\vert\, C \right], \]
	where $C$ is any event of the form
	\[ \bigcap_{i \in[m]} \{ Z_i \in [a_i,b_i] \}. \]

	With $f$ defined as in~\eqref{eq:deff}, we see that
	\begin{align*}
		\Pb\left[ \bZ \geq s \,\vert\, \{Z_1 \geq t\} \cap C \right] & = f(-s,\max\{t,a_1\}, b_1, a_2,b_2,\dots, a_m,b_m), \text{ and } \\
		\Pb\left[ \bZ \geq s \,\vert\, C \right]                     & = f(-s,a_1, b_1,\dots, a_m,b_m);
	\end{align*}
	the claim now follows from Lemma~\ref{lem:inc}, i.e., the fact that $f$ is an increasing function.
\end{proof}

\section{Log-concavity for Youden's demon}\label{sec:proof-LC}
Our goal in this section is to prove Theorem~\ref{thm:LC}. We start with the following lemma, which is a direct consequence of Theorem~\ref{thm:CNA}. Let $Z_1,\dots, Z_m$ be a sequence of jointly normal random variables and $\bZ$ be the corresponding sample mean. Writing
\[ \ar_k(Z_1,\dots,Z_m) = \Pb \left[ \bZ \geq 0 \,\vert\, \{Z_1,\dots, Z_k \geq 0, Z_{k+1},\dots, Z_m \leq 0\} \right], \]
we have the following lemma.

\begin{lemma}\label{lem:red-Y}
	Let $Z_1,\dots, Z_m$ be a repulsive–cooperative Gaussian ensemble. Then, for all $k \in \{0,\dots,m-1\}$, we have
	\[ \ar_{k+1}(Z_1,\dots, Z_m) \geq \ar_{k}(Z_1,\dots, Z_m). \]
\end{lemma}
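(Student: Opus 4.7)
The plan is to deduce Lemma~\ref{lem:red-Y} as an essentially immediate corollary of Theorem~\ref{thm:CNA}, exploiting the fact that $r_k$ and $r_{k+1}$ differ only in how they condition on the single variable $Z_{k+1}$. Specifically, I would single out $Z_{k+1}$ and pack everything else into a background event of the form required by Theorem~\ref{thm:CNA}.

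First, I would define the event
\[ C = \bigcap_{i=1}^{k} \{Z_i \in [0,\infty]\} \,\cap\, \bigcap_{i=k+2}^{m} \{Z_i \in [-\infty,0]\}, \]
which (after setting the interval for $Z_{k+1}$ to the trivial $[-\infty,\infty]$) is exactly of the form~\eqref{eq:cond}. Theorem~\ref{thm:CNA} applied to index $k+1$ then tells me that $\bZ$ and $Z_{k+1}$ are positively associated given $C$. Taking the thresholds $s = t = 0$ in the definition of positive association yields
\[ \Pb[\bZ \geq 0,\, Z_{k+1} \geq 0 \mid C] \geq \Pb[\bZ \geq 0 \mid C] \cdot \Pb[Z_{k+1} \geq 0 \mid C]. \]
Since $Z_{k+1}$ is Gaussian and so $\Pb[Z_{k+1} = 0] = 0$, subtracting this from the identity $\Pb[\bZ \geq 0 \mid C] = \Pb[\bZ \geq 0, Z_{k+1} \geq 0 \mid C] + \Pb[\bZ \geq 0, Z_{k+1} \leq 0 \mid C]$ immediately yields the complementary inequality
\[ \Pb[\bZ \geq 0,\, Z_{k+1} \leq 0 \mid C] \leq \Pb[\bZ \geq 0 \mid C] \cdot \Pb[Z_{k+1} \leq 0 \mid C]. \]

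Finally, I would rewrite $r_{k+1}(Z_1,\dots,Z_m) = \Pb[\bZ \geq 0 \mid C \cap \{Z_{k+1} \geq 0\}]$ and $r_k(Z_1,\dots,Z_m) = \Pb[\bZ \geq 0 \mid C \cap \{Z_{k+1} \leq 0\}]$ via Bayes' rule, so that dividing the first inequality above by $\Pb[Z_{k+1} \geq 0 \mid C]$ and the second by $\Pb[Z_{k+1} \leq 0 \mid C]$ translates them directly into
\[ r_{k+1}(Z_1,\dots,Z_m) \geq \Pb[\bZ \geq 0 \mid C] \geq r_k(Z_1,\dots,Z_m), \]
which is the desired conclusion. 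Since all of the substantive content is carried by Theorem~\ref{thm:CNA}, this reduction is just bookkeeping with conditional probabilities and I do not anticipate any serious obstacle; the only minor point to keep in mind is the continuity of Gaussians, which allows the decomposition into $\{Z_{k+1} \geq 0\}$ and $\{Z_{k+1} \leq 0\}$ without double-counting.
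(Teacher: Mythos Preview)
Your proposal is correct and follows essentially the same route as the paper: define the background event $C$ constraining all variables except $Z_{k+1}$, invoke Theorem~\ref{thm:CNA} to get that $\bZ$ and $Z_{k+1}$ are positively associated given $C$, and then sandwich $\Pb[\bZ \geq 0 \mid C]$ between $r_k$ and $r_{k+1}$. The only cosmetic difference is that you spell out the covariance-style inequality and divide, whereas the paper jumps directly to the conditional-probability form $\Pb[\bZ \geq 0 \mid \{Z_{k+1} \leq 0\} \cap C] \leq \Pb[\bZ \geq 0 \mid C] \leq \Pb[\bZ \geq 0 \mid \{Z_{k+1} \geq 0\} \cap C]$.
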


Notice that the statement of Lemma~\ref{lem:red-Y} is essentially trivial if $Z_1,\dots, Z_m$ are independent. However, we only know that these random variables have negative covariances (from~\ref{eq:N}), so extra arguments are needed.

\begin{proof}[Proof of Lemma~\ref{lem:red-Y}]
	Let $C$ be the event
	\[ C = \{ Z_1,\dots, Z_k \geq 0, Z_{k+2}, \dots, Z_m \leq 0 \}. \]
	Note that this event is of a form that is admissible in Theorem~\ref{thm:CNA}; indeed, in the language of Theorem~\ref{thm:CNA}, we may take $[a_i,b_i]=[0,\infty]$ for $i \in [k]$, $[a_{k+1},b_{k+1}]=[-\infty,\infty]$, and $[a_i,b_i]=[-\infty,0]$ for $i \in \{k+2,\dots, m\}$.

	Now, by Theorem~\ref{thm:CNA}, we know that $\bZ$ and $Z_{k+1}$ are positively associated given $C$. Consequently, we have
	\begin{align*}
		\ar_k(Z_1,\dots, Z_m) & = \Pb\left[ \bZ \geq 0 \, \vert \, \{{\color{blue}{Z_{k+1}\leq 0}} \} \cap C\right] \leq \Pb\left[ \bZ \geq 0 \,\vert\, C\right] \\
		                      & \leq \Pb\left[ \bZ \geq 0 \, \vert \, \{{\color{blue}{Z_{k+1}\geq 0}} \} \cap C\right]= \ar_{k+1}(Z_1,\dots, Z_m),
	\end{align*}
	and the result follows.
\end{proof}

We are now ready to prove Theorem~\ref{thm:LC}.

\begin{proof}[Proof of Theorem~\ref{thm:LC}]
	Let $Y_1,\dots, Y_n$ be a set of independent standard normal random variables, and let $1 \leq k \leq n-1$. We need to show that
	\[\aq_{k}^2 \geq \aq_{k+1} \aq_{k-1},\]
	where $\aq_k = \Pb[ Y_1,\dots, Y_k \leq \bY \leq Y_{k+1}, \dots, Y_{n}]$, and $q_0 = q_n = 0$. We may assume that $2 \le k \leq n-2$, as otherwise, the right hand side of the bound above is $0$ and there is nothing to prove. Let $m= n-1$, and let $Z_1,\dots, Z_m$ be jointly normal random variables defined by
	\[ Z_i = \
		\begin{cases}
			\bY - Y_i     & \text{ if $1 \leq i\leq k$},    \\
			\bY - Y_{i+1} & \text{ if $k+1 \leq i \leq m$}.
		\end{cases}
	\]
	Direct calculation shows that
	\begin{align*}
		 & \Eb[Z_i^2] = 1 - \frac{1}{n},            \\
		 & \Eb[Z_i Z_j] = -\frac{1}{n}, \text{ and} \\
		 & \Eb[Z_i(Z_1+\dots+Z_m)] = \frac{1}{n};
	\end{align*}
	it follows that $Z_1,\dots, Z_m$ is a repulsive–cooperative Gaussian ensemble.

	Since
	\[ Z_1+\dots + Z_m = Y_{k+1}- \bY,\]
	we have
	\begin{align*}
		\aq_k & = \Pb\left[ \{Y_{k+1} \geq \bY \} \cap \{Y_1,\dots, Y_k \leq \bY \leq Y_{k+2}, \dots, Y_{n} \} \right]                 \\
		      & = \Pb\left[ \{Z_1+\dots +Z_m {\color{blue}\geq 0} \} \cap \{Z_1,\dots, Z_k \geq 0 \geq Z_{k+1}, \dots, Z_m \} \right].
	\end{align*}
	By the same reasoning,
	\[
		\aq_{k+1} = \Pb\big[ Z_1+\dots +Z_m {\color{blue}\leq 0}, Z_1,\dots, Z_k \geq 0, Z_{k+1}, \dots, Z_m \leq 0 \big].
	\]
	It then follows that
	\begin{align*}
		\frac{\aq_k}{\aq_k + \aq_{k+1}} & = \Pb \left[ Z_1+\dots +Z_m \geq 0 \,\vert\, Z_1,\dots, Z_k \geq 0\geq Z_{k+1},\dots, Z_m \ \right] \\
		                                & = \ar_k(Z_1,\dots, Z_m).
	\end{align*}

	Now, let $Z_1',\dots, Z_m'$ be random variables defined by
	\[ Z_i' := \
		\begin{cases}
			\bY - Y_i     & \text{ if $1 \leq i\leq k-1$}, \\
			\bY - Y_{i+1} & \text{ if $k \leq i \leq m$}.
		\end{cases}
	\]
	Note that $(Z_1',\dots, Z_m')$ and $(Z_1,\dots, Z_m)$ are identically distributed, and as before, we have
	\begin{align*}
		\frac{\aq_{k-1}}{\aq_{k-1} + \aq_{k}} & = \Pb \left[ Z'_1+\dots +Z'_m \geq 0 \,\vert\, Z'_1,\dots, Z'_{k-1} \geq 0\geq Z'_k,\dots, Z'_m \ \right] \\
		                                      & = \ar_{k-1}(Z'_1,\dots, Z'_m).
	\end{align*}

	Lemma~\ref{lem:red-Y} then tells us that
	\begin{align*}
		\frac{\aq_k}{\aq_{k} + \aq_{k+1}} = \ar_k(Z_1,\dots, Z_m) \ge \ar_{k-1}(Z_1,\dots, Z_m) = \ar_{k-1}(Z_1',\dots, Z_m') = \frac{\aq_{k-1}}{\aq_{k-1} + \aq_{k}},
	\end{align*}
	and it is easy to check that the inequality above is equivalent to Theorem~\ref{thm:LC}, completing the proof.
\end{proof}

\section{Ultra log-concavity for Radon partitions}\label{sec:proof-ULC}
For $n \ge 3$, recall that $\cS_n$ is a uniformly random part of the (almost surely) unique Radon partition of $X = \{\Xb_1,\dots, \Xb_{n}\} \subset \rr^{n-2}$, a set of $n$ independent standard normal random vectors. For a set $Y_1,\dots, Y_n$ of independent standard normal random variables, let $\cP_n \subseteq [n]$ be defined by
\[\cP_n = \{ i \in [n] \mid Y_i \leq \bY \}; \]
i.e., the $\cP_n$ is the (random) set of indices $i$ for which $Y_i$ is less than the sample mean. The next lemma follows from an argument analogous to those in~\citep{BV97,FNP25}, but we include a proof for completeness.

\begin{lemma}\label{lem:iso}
	The random variables $\cS_n$ 	and $\cP_n$ are equal in distribution.
\end{lemma}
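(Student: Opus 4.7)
The plan is to realise both $\cS_n$ and $\cP_n$ as the set of negative coordinates of a uniformly random unit vector in the hyperplane $H = \{v \in \rr^n : \sum_i v_i = 0\}$, from which the equality in distribution is immediate.

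For $\cS_n$: since $\Xb_1,\dots,\Xb_n$ are almost surely in general position, there is an essentially unique (up to global sign) nonzero vector $\lambda = (\lambda_1,\dots,\lambda_n)$ with $\sum_i \lambda_i \Xb_i = 0$ and $\sum_i \lambda_i = 0$, and the Radon partition is then $\{\{i : \lambda_i > 0\}, \{i : \lambda_i < 0\}\}$. Equivalently, $\lambda$ spans the kernel of the $(n-1) \times n$ matrix $\tilde M$ whose first $n-2$ rows are the coordinate rows of the vectors $\Xb_i$, and whose last row is the all-ones row. Since $\ker \tilde M$ is orthogonal to the all-ones vector, we have $\ker \tilde M \subset H$; moreover, for any $\lambda \in H$ and any row $R$ of $\tilde M$, the inner product $R \cdot \lambda$ depends only on the projection of $R$ onto $H$. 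The first $n-2$ rows project to $n-2$ i.i.d.\ standard Gaussian vectors on $H$, whose joint law is invariant under every orthogonal transformation of $H$; hence so is the law of the one-dimensional subspace $\ker \tilde M \subset H$. Uniformly choosing $\cS_n \in \{\cA_n,\cB_n\}$ amounts to choosing the sign of $\lambda$ uniformly at random, so $\cS_n$ is distributed as $\{i : v_i < 0\}$ where $v$ is uniform on the unit sphere of $H$.

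For $\cP_n$: the vector $Z = (Y_1-\bY,\dots,Y_n-\bY)$ lies in $H$, and a direct calculation gives $\Cov(Z_i,Z_j) = \delta_{ij} - 1/n$, which is exactly the matrix of the orthogonal projection from $\rr^n$ onto $H$. Hence $Z$ is a standard Gaussian on $H$, $Z/|Z|$ is uniform on the unit sphere of $H$, and $\cP_n = \{i : Z_i \le 0\}$ agrees almost surely with $\{i : (Z/|Z|)_i < 0\}$. Comparing this with the previous paragraph gives $\cS_n \stackrel{d}{=} \cP_n$.

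The only non-routine step is the rotational invariance argument inside $H$ used to describe the law of $\ker \tilde M$; the rest is a short covariance computation and a symmetry observation. This mirrors the template of~\citep{BV97,FNP25} that the authors cite.
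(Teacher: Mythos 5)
Your proposal is correct and takes essentially the same route as the paper: both arguments realise $\cS_n$ and $\cP_n$ as the sign pattern of a uniformly distributed unit vector on the unit sphere of the zero-sum hyperplane $\{ \xb \in \rr^n : \one \cdot \xb = 0\}$, with uniformity coming from the rotational invariance of the Gaussian. Your explicit projection-onto-$H$ argument and the covariance computation identifying $(Y_i-\bY)_i$ as a standard Gaussian on $H$ are just a more hands-on rendering of the paper's argument via the uniqueness of the measure invariant under the transitive action of the group $G$.
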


\begin{proof}
	Let $K$ be the subset of the unit sphere $\sphere^{n-1}$ defined by
	\[ K = \left \{ \xb \in \sphere^{n-1} : \one \cdot \xb=0 \right \}, \]
	where $\one=(1,\dots, 1) \in \rr^n$, and let $\mu$ be the uniform probability measure on $K$. Note that this measure is invariant under the action of the group $G \leq \SO(n)$ given by
	\[ G = \left \{ A \in \SO(n):  \one \cdot A \in \{\one,-\one\} \right\}, \]
    where $\SO(n)$ is the special orthogonal group.
	It is easily seen that the action of $G$ on $K$ is transitive, so $\mu$ is the unique probability measure on $K$ that is invariant under the action of $G$.

	We now express the law of $\cS_n$ in terms of $\mu$. Let $\Vb=(V_1,\dots, V_{n}) \in K$ be a vector that satisfies
	\[
		V_1 \Xb_1 + \cdots + V_{n} \Xb_{n} = 0.
	\]
	Note that distinct $i,j \in [n]$ belong to the same part of the Radon partition of $\Xb_1,\dots, \Xb_{n}$ if and only if $V_i$ and $V_j$ have the same sign. Also note that $\Vb$ is unique up to a sign change almost surely since $\Xb_1,\dots, \Xb_{n}$ are in general position almost surely. From the two vectors $\Vb$ and $-\Vb$, we choose one uniformly at random, and we denote this random vector by $\Wb=(W_1,\dots, W_{n})$. It then follows from the construction that
	\begin{equation}\label{eq:iso-1}
		\cS_n = \{ i \in [n] \mid W_i \geq 0 \}.
	\end{equation}
	Now, note that the measure induced by $(\Xb_1,\dots, \Xb_{n})$ is invariant under the action of $\SO(n)$ as $\Xb_1,\dots, \Xb_n$ are independent standard normal random vectors. It then follows that the measure induced by $\Wb$ is also invariant under the action of $G$. Therefore, we conclude that $\Wb$ is distributed according to $\mu$.

	Next, we express the law of $\cP_n$ in terms of $\mu$. Let $\Vb'=(V_1',\dots, V_n') \in \rr^n$ be the random vector given by
	\[ V_i' = \bY - Y_i \]
	for $i \in [n]$,  and let $\Wb'=(W_1',\dots, W_n')$ be the vector in $K$ given by
	\[ \Wb' = \frac{1}{\sqrt{\Vb' \cdot \Vb'}} \Vb'. \]
	It then follows from the construction that
	\begin{equation}\label{eq:iso-2}
		\cP_n = \{ i \in [n] :  V_i' \geq 0 \} = \{ i \in [n] :  W_i' \geq 0 \}.
	\end{equation}
	Since the measure induced by $(Y_1,\dots, Y_n)$ is invariant under the action of $\SO(n)$, it follows that the measure induced by $\Wb'$ is unique under the action of $G$. We conclude that $\Wb'$ is distributed according to $\mu$ as well, and the lemma now follows from comparing~\eqref{eq:iso-1} and~\eqref{eq:iso-2}.
\end{proof}

We may now deduce Theorem~\ref{thm:ULC} from Theorem~\ref{thm:LC} and Lemma~\ref{lem:iso}.
\begin{proof}[Proof of Theorem~\ref{thm:ULC} ]
	It follows from Lemma~\ref{lem:iso} that, for $k \in \{0,\dots,n\}$, we have
	\begin{align*}
		\ap_k & = \Pb[|\cS_n|=k] = \Pb[|\cP_n|=k]                                                               \\
		      & = \binom{n}{k} \Pb[Y_1, \dots, Y_k \leq \bY, Y_{k+1},\dots, Y_n \geq \bY] = \binom{n}{k} \aq_k,
	\end{align*}
	where $\aq_k$ is as in~\eqref{eq:q_k}. The result now follows from Theorem~\ref{thm:LC}.
\end{proof}

Unimodality is of course a straightforward consequence of Theorem~\ref{thm:ULC}.
\begin{proof}[Proof of Corollary~\ref{cor:unimodality}]
	It is clear that $(\ap_{0},\dots, \ap_{n})$ is a sequence of non-negative integers with no internal zeroes. It is ultra log-concave by Theorem~\ref{thm:ULC}, whence it is  log-concave as well. This in turn implies that the sequence is unimodal, i.e., that $\ap_0 \leq \cdots \leq \ap_{\ell} \geq \cdots \geq \ap_{n}$ for some $\ell \in \{0,\dots,n\}$. On the other hand, we also know that $\ap_{k}=\ap_{n-k}$ for all $k \in \{0,\dots,n\}$ by the symmetry in the construction of $\cS_n$; the corollary follows.
\end{proof}

\section{Necessity of assumptions}\label{sec:counter-example}
In this section, we shall show that the assumptions of Theorems~\ref{thm:LC} and~\ref{thm:CNA} are necessary.

First, we show that the conclusions of Theorem~\ref{thm:LC} need not hold without the assumption of normality. Below, we construct independent and identically distributed non-normal random variables that do not satisfy the conclusion of Theorem~\ref{thm:LC}.

Let $A_1, \dots, A_n$ be i.i.d.\ random variables with law given by
\[ \Pb[A_i =x ] =
	\
	\begin{cases}
		\tfrac{1}{n}   & x \in \{-1, 1\}, \text{ and} \\
		1-\tfrac{2}{n} & x = 0.
	\end{cases}
\]
Let $B_1,\dots, B_n$ be i.i.d.\ random variables which are uniform on the interval $[-\tfrac{1}{e^n}, \tfrac{1}{e^n}]$, and are independent of $A_1,\dots, A_n$.
For $i \in [n]$, let
\[ Y_i = A_i+B_i; \]
$Y_i$ is essentially $A_i$, but with a small perturbation given by $B_i$ to ensure that all these variables are almost surely distinct.

On one hand, we have
\begin{align*}
	\aq_1(Y_1,\dots, Y_n) & = \Pb \left [ Y_1 \leq \bY \leq Y_{2}, \dots, Y_{n} \right]            \\
	                      & \geq \Pb\big[ A_1 =-1, A_2=\cdots=A_n=0 \big]                          \\
	                      & = \frac{1}{n} \big(1-\frac{2}{n} \big)^{n-1} \approx \frac{e^{-2}}{n}.
\end{align*}
On the other hand, we have
\begin{align*}
	\aq_{\lfloor n/2\rfloor}(Y_1,\dots, Y_n) & = \Pb \left [ Y_1,\dots, Y_{\lfloor n/2\rfloor} \leq \bY \leq Y_{\lfloor n/2\rfloor +1}, \dots, Y_{n} \right],                                                          \\
	                                         & =  (\lfloor n/2\rfloor)! (\lceil n/2\rceil)! \Pb\left[ Y_1 \leq \dots \leq Y_{\lfloor n/2\rfloor} \leq \bY \leq Y_{\lfloor n/2\rfloor +1} \leq \dots \leq Y_{n} \right] \\
	                                         & \leq (\lfloor n/2\rfloor)! (\lceil n/2\rceil)! \Pb\left[ Y_1 \leq \dots \leq Y_{\lfloor n/2\rfloor} \leq Y_{\lfloor n/2\rfloor +1} \leq \dots \leq Y_{n} \right]        \\
	                                         & = \frac{1}{\binom{n}{\lfloor n/2 \rfloor}} \approx \frac{\sqrt{n \pi/2}}{2^n}.
\end{align*}
where the second equality holds because the $Y_i$ are almost surely distinct.Hence, we have $\aq_1 > \aq_{\lfloor n/2\rfloor}$ for sufficiently large $n$, so the conclusion of Theorem~\ref{thm:LC} does not hold for these random variables.

Next, we justify the assumptions that Theorem~\ref{thm:CNA} relies on. Pitt~\cite{Pitt} showed that, for jointly normal random variables $Z_1,\dots, Z_m$ with mean $0$, positive correlations $\Eb[Z_i Z_j] \geq 0$ for all $i,j \in [m]$ imply positive associations $Z_i \uparrow Z_j$ for all $i,j \in [m]$. In particular, if condition~\ref{eq:P} is met, then $\bZ \uparrow Z_i$ for all $i\in [m]$. Given this, it is then natural to ask if condition~\ref{eq:P} alone is already sufficient to imply
the conditional positive associations guaranteed by Theorem~\ref{thm:CNA}. However, the example below shows that~\ref{eq:P} alone is not sufficient.

Let $Y_1$ and $Y_2$ be i.i.d.\ standard normal random variables, and set $Z_1= 2Y_1-Y_2$, $Z_2 = Y_1+2Y_2$, and $Z_3=Y_1$.
The covariance matrix $\Sigma=(\Eb\big[Z_i Z_j\big])_{i,j \in [m]}$ of $Z_1$, $Z_2$ and $Z_3$ is given by
\begin{align*}
	\Sigma = \
	\begin{bmatrix}
		5 & 0 & 2 \\
		0 & 5 & 1 \\
		2 & 1 & 1
	\end{bmatrix}.
\end{align*}
All entries of $\Sigma$ are nonnegative, and in particular \ref{eq:P} is satisfied. Now, let $C$ be the event $\{ Z_3 \in [0, \varepsilon] \}$ for some $\varepsilon <{1}/{3}$. Then we have
\begin{align*}
	\Pb\left[ \{\bZ\geq {1}/{3}\} \cap \{Z_1 \geq 1\} \,\vert\, C \right] = \Pb\left[ 1-4Y_1 \leq Y_2 \leq 2Y_1-1 \,\vert\, Y_1 \in [0,\varepsilon] \right] = 0.
\end{align*}
On the other hand,
\begin{align*}
	\Pb\left[ \{\bZ\geq {1}/{3}\} \,\vert\, C \right] \Pb\left[ Z_1\geq 1 \,\vert\, C \right] > 0,
\end{align*}
telling us that the conditional positive association of $\bZ$ and $Z_1$ given $C$ does not hold.

\section {Further directions}\label {sec:sto-conv}

Here, we discuss a number of directions for further research related to the results of this paper.

\subsection*{Random points in a prescribed set} The Gaussian model adopted in this paper is convenient for the analytic and empirical study of high-dimensional extensions for the Sylvester four-point question. However, it would be quite interesting to investigate the arguably more common setting for Sylvester's question.

\begin{prob}\label{prob:model-K} Extend Theorem~\ref {thm:ULC} (or its weaker corollaries) to the model of points drawn uniformly at random from a fixed $d$-dimensional convex set $K$.
\end {prob}

It would also be interesting to prove a central limit theorem (like Theorem~\ref{thm:CLT}) for this model.

\subsection*{Radon partitions of larger sets} For fixed integers $d,n \in \mathbb{N}$, consider the probabilities $\ap'_k(n,d)$ that, for a sequence of $n$ random Gaussian points $\Xb_1,\dots,\Xb_n$ in $\mathbb R^{d}$, we have
\[\textnormal{Conv} \{\Xb_1,\dots, \Xb_k\} \cap \textnormal{Conv} \{\Xb_{k+1},\dots, \Xb_n\} \ne \emptyset.\]
Note that, when $n=d+2$, we have
\[ \ap_k = \frac{\ap'_k(n,d)}{2 \binom{n}{k}},\]
for each $k \in \{0,\dots,n\}$, where $p_k=p_k^{(n)}$ is as defined in~\eqref{eq:p_k}.

We conjecture the following generalization of Theorem~\ref{thm:ULC}.
\begin{conj}\label{conj:White}
For all $n \ge d+2$,
\begin{enumerate}
	\item\label{it:White-1} $\ap'_1(n,d) \le \ap'_2(n,d) \le \cdots \le \ap'_{\lfloor n/2 \rfloor}(n,d)$, and moreover,
	\item\label{it:White-2} the sequence $(\ap'_1(n,d),\dots, \ap'_n(n,d))$ is log-concave.
\end{enumerate}
\end {conj}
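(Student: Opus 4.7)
The plan is to generalize the Gale-dual / random-subspace viewpoint that underlies the proof of Theorem~\ref{thm:ULC}. Set $m := n-d-1$ and $H := \{v \in \mathbb{R}^n : \sum_i v_i = 0\}$. By rotational invariance of the Gaussian, the space of affine dependences $W := \{v : \sum_i v_i = 0,\ \sum_i v_i \mathbf{X}_i = 0\}$ is uniformly distributed on the Grassmannian $\mathrm{Gr}(m, H)$, and with $C_k := \{v \in \mathbb{R}^n : v_i \ge 0 \text{ for } i \le k,\ v_j \le 0 \text{ for } j > k\}$ denoting the split-orthant cone, one checks that $p'_k(n,d) = \Pb[V \cap C_k \ne \{0\}]$ for $V$ uniform on $\mathrm{Gr}(m, H)$. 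Equivalently, drawing $\mathbf{Z}_1, \dots, \mathbf{Z}_n$ i.i.d.\ standard Gaussian in $\mathbb{R}^m$ with mean $\bar{\mathbf{Z}}$, $p'_k(n,d)$ equals the probability that some hyperplane through $\bar{\mathbf{Z}}$ weakly separates $\{\mathbf{Z}_1, \dots, \mathbf{Z}_k\}$ from $\{\mathbf{Z}_{k+1}, \dots, \mathbf{Z}_n\}$. When $m=1$ this equals $2q_k^{(n)}$, so both parts of the conjecture already follow from Theorem~\ref{thm:LC}.

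For part~(1), a useful preliminary observation is that the random cone $K_k \subset \mathbb{R}^m$ of separating directions has expected fractional solid angle exactly $q_k^{(n)}$, regardless of $m$: for any unit $c \in \mathbb{R}^m$, the scalars $\{c \cdot (\mathbf{Z}_i - \bar{\mathbf{Z}})\}_{i=1}^n$ are jointly distributed as centered i.i.d.\ scalar Gaussians, giving $\Pb[c \in K_k] = q_k^{(n)}$, whence $\Eb[\mathrm{SA}(K_k)] = q_k^{(n)}$ by Fubini. Since $p'_k(n,d) = \Pb[\mathrm{SA}(K_k) > 0]$, Theorem~\ref{thm:LC} already yields monotonicity of the \emph{mean} for $k \le n/2$, and the plan is to upgrade this to a pointwise comparison $\mathrm{SA}(K_k) \le \mathrm{SA}(K_{k+1})$ for $k < n/2$. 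The key structural tool is the \emph{conservation identity} $\mathrm{SA}(K_k) + \mathrm{SA}(K_{k+1}) = \mathrm{SA}(P_{k,k+1})$, where $P_{k,k+1}$ denotes the ``parent'' cone obtained by dropping the sign constraint on $\mathbf{Z}_{k+1}$; this reduces monotonicity of $p'_k$ to showing that $K_{k+1}$ is the ``larger half'' of $P_{k,k+1}$ in a suitable stochastic sense when $k < n/2$, which one might approach by a symmetrization over the side assignment of $\mathbf{Z}_{k+1}$.

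For part~(2), the plan is to express $p'_k(n,d)$ via the conic kinematic formula of Amelunxen--Lotz--McCoy--Tropp as a specific combination of the conic intrinsic volumes $v_j(C_k \cap H)$ inside the ambient $(n-1)$-dimensional hyperplane $H$, and then to establish log-concavity in $k$ of this combination. The cones $C_k \cap H$ are isometric (via a coordinate sign flip on the last $n-k$ coordinates) to the polyhedral slices $\{w \in \mathbb{R}^n_{\ge 0} : \sum_{i \le k} w_i = \sum_{j > k} w_j\}$, whose face structure is explicit enough to permit combinatorial analysis. For the top index $j = n-1$, $v_{n-1}(C_k \cap H)$ equals the fractional solid angle and hence equals $q_k^{(n)}$ by the observation above, and its log-concavity in $k$ is precisely Theorem~\ref{thm:LC}, so the strategy is at least consistent with the scalar case. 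An alternative route is to generalize Theorem~\ref{thm:CNA} to \emph{vector-valued} repulsive–cooperative Gaussian ensembles in $\mathbb{R}^m$ (with matrix analogues of \ref{eq:N} and \ref{eq:P}), and then mimic the reduction in the proof of Theorem~\ref{thm:LC}.

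The main obstacle will be controlling the intermediate intrinsic volumes $v_j(C_k \cap H)$ for $m \le j < n-1$: no general machinery is known to yield log-concavity in a deformation parameter like $k$, and a direct combinatorial verification appears to require substantial new ideas. The alternative ensemble-based approach also breaks down at the inductive step of Lemma~\ref{lem:inc}: the scalar projection $Z_i' := Z_i + \alpha_i Z_m$ used there is engineered so that both \ref{eq:N} and \ref{eq:P} survive, but in the vector-valued setting the coefficients $\alpha_i$ become $m \times m$ matrices, and preserving the matrix analogues of \ref{eq:N} and \ref{eq:P} simultaneously appears to require two essentially incompatible matrix inequalities — which is the central challenge to overcome.
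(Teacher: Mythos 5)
First, a point of calibration: the statement you are addressing is stated in the paper as Conjecture~\ref{conj:White}, not as a theorem --- the paper offers no proof of it, only the observations that the case $n=d+2$ follows from Theorem~\ref{thm:ULC} (equivalently Theorem~\ref{thm:LC}, via the Gale-dual/Youden reformulation sketched after the conjecture, which is the same reformulation you give) and that part~(\ref{it:White-1}) is supported by simulations. Your submission is likewise not a proof but a research programme, and you say so yourself; so the honest assessment is whether the programme is sound. Your correct ingredients --- the identification $p'_k(n,d)=\Pb[W\cap C_k\neq\{0\}]$ for a uniform $m$-plane $W$ in the hyperplane $H$, the observation that $\Eb[\mathrm{SA}(K_k)]=q_k^{(n)}$ by integrating $\Pb[c\in K_k]=q_k^{(n)}$ over directions, and the $m=1$ identity $p'_k=2q_k^{(n)}$ --- are all fine, and the last of these is exactly the paper's own remark that the conjecture holds when $n=d+2$.

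The genuine gap is in the step you propose for part~(\ref{it:White-1}): upgrading $\Eb[\mathrm{SA}(K_k)]\le\Eb[\mathrm{SA}(K_{k+1})]$ to the \emph{pointwise} comparison $\mathrm{SA}(K_k)\le\mathrm{SA}(K_{k+1})$ for $k<n/2$. This is false, not merely hard. Already for $m=1$ (i.e.\ $n=d+2$) the cones $K_k$ and $K_{k+1}$ are contained in $\{c\in\rr:\pm c\ge0\}$ determined by the realized sign pattern of $(Z_i-\bZ)_i$, and at most one of $K_k,K_{k+1}$ can be nontrivial in any realization: if exactly $k$ of the $Z_i$ lie below the mean then $\mathrm{SA}(K_k)>0=\mathrm{SA}(K_{k+1})$, so no realization-wise inequality can hold. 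The conservation identity $\mathrm{SA}(K_k)+\mathrm{SA}(K_{k+1})=\mathrm{SA}(P_{k,k+1})$ is correct but only redistributes the problem; what one actually needs is a comparison of the probabilities $\Pb[\mathrm{SA}(K_k)>0]$, a different functional of the cone than its expected solid angle, and neither Theorem~\ref{thm:LC} nor your symmetrization suggestion supplies it. For part~(\ref{it:White-2}) you give no argument at all: the reduction to conic intrinsic volumes $v_j(C_k\cap H)$ is a reasonable reformulation, but you correctly note that log-concavity in $k$ of the intermediate intrinsic volumes is unknown, and your proposed vector-valued extension of Theorem~\ref{thm:CNA} stalls exactly where you say it does (the matrix analogue of the projection step in Lemma~\ref{lem:inc}). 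So the proposal should be read as a (partly flawed) plan of attack on an open problem, not as a proof, and the one concrete bridging claim it does make --- pointwise monotonicity of solid angles --- must be abandoned or replaced.
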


The first claim in Conjecture~\ref{conj:White} was proposed in~\citep{White}, and it is supported by extensive computational evidence. Theorem~\ref{thm:ULC} establishes Conjecture~\ref{conj:White} when $n = d + 2$, lending further support for the conjecture.

These probabilities \( \ap_k'(n,d) \) also admit the following interpretation in terms of Youden's demon problem:
suppose one has \( r \) Gaussian vectors in \( \rr^n \), where \( r = n - d - 1 \). Then \( \ap_k'(n,d) \) is the probability that these vectors positively span a vector \( V \in \mathbb{R}^n \) such that the first \( k \) coordinates of \( V \) are all at least the mean of its entries, and the last \( n - k \) coordinates are all at most the mean. The equivalence between these two interpretations of \( \ap'_k(n,d) \) follows from the same argument as in Lemma~\ref{lem:iso}.

\subsection* {Counting Tverberg partitions} Recall that Tverberg's theorem asserts that any $(d+1)(r-1)+1$ points in $\rr^d$ can be partitioned into $r$ sets whose convex hulls intersect; such a partition is called a \defnb{Tverberg partition} of order $r$.
A long-standing conjecture due to Sierksma~\citep{Sie79} on the minimal number of Tverberg partitions asserts the following.

\begin{conj}\label{conj:Sierksma}
The minimum number $m(d,r)$ of (unordered) Tverberg partitions of a set of $ (d+1)(r-1)+1$ points in $\rr^d$ is $ ((r-1)!)^d$.
\end {conj}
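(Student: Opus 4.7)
The plan is to attack Conjecture~\ref{conj:Sierksma} in two stages: first establish the upper bound $m(d,r) \leq ((r-1)!)^d$ by exhibiting an explicit extremal configuration, and then prove the matching lower bound via equivariant topology, supplemented by a Gale-dual reformulation in the spirit of Lemma~\ref{lem:iso}. For the upper bound I would analyze the \emph{Sierksma configuration}: place $r-1$ slightly perturbed copies at each vertex $v_0,\dots,v_d$ of a $d$-simplex in $\rr^d$, together with one extra point $p$ near the barycenter. Any Tverberg partition of this $(d+1)(r-1)+1$-point set must place $p$ into one block and distribute the $r-1$ copies of each vertex among the remaining $r-1$ blocks so that each such block receives exactly one copy of every vertex; this yields $(r-1)!$ choices per vertex, hence $((r-1)!)^d$ partitions in total. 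Taking the perturbation to zero and invoking continuity of convex hulls gives the upper bound.

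For the matching lower bound I would adopt the configuration space / test map paradigm of Sarkaria and Vre\'{c}ica--\v{Z}ivaljevi\'{c}. Partitions of an $N$-point set into $r$ blocks, where $N = (d+1)(r-1)+1$, are encoded as points of the $r$-fold deleted join of an $(N-1)$-simplex, and the failure of the convex hulls to meet is encoded as an $S_r$-equivariant map to the unit sphere of $W_r^{d+1}$, where $W_r$ is the standard $(r-1)$-dimensional representation of $S_r$. The number of Tverberg partitions then corresponds to a signed count of zeros of this map. For $r$ a prime power one localizes at a Sylow subgroup and applies an equivariant index calculation with $\mathbb{Z}/p$-coefficients to obtain a lower bound of the correct order; the remaining combinatorial step is to track the equivariant Euler class carefully and verify that it contributes exactly $((r-1)!)^d$, rather than a proper divisor of it.

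In parallel I would pursue a combinatorial reformulation via Gale duality, guided by Lemma~\ref{lem:iso}. That lemma reduces Radon partitions of Gaussian $\Xb_1,\dots,\Xb_n$ in $\rr^{n-2}$ to sign patterns of $\bY - Y_i$ on $\sphere^{n-2}$; an analogous reduction should express Tverberg partitions of $N$ Gaussian points in $\rr^d$ in terms of positively-spanning sign patterns of $r-1$ Gale-dual vectors on $\sphere^{N-d-2}$. This perspective recasts the count as an enumeration of sign vectors cut out by a linear hyperplane arrangement, where one could hope to bring to bear log-concavity or Lorentzian-polynomial techniques from Huh's program cited in Section~\ref{s:intro}, paralleling the structure exploited in the present paper.

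The principal difficulty, and the reason this conjecture has resisted attack for nearly five decades, lies in the case where $r$ is not a prime power: the $\mathbb{Z}/p$-equivariant machinery collapses because $S_r$ lacks a fixed-point-free subgroup of the required order acting on the test space, and the known topological bounds fall strictly short of $((r-1)!)^d$. Bridging this gap seems to require either a genuinely new equivariant invariant sensitive to the full symmetric group action, or a combinatorial-algebraic proof along the Gale-dual route above---for instance, realizing $((r-1)!)^d$ as a permanent or mixed volume associated to a Stieltjes-type covariance structure reminiscent of condition~\ref{eq:N} in this paper, and then applying a log-concavity argument to pin down the minimum.
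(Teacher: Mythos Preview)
The statement you are addressing is Conjecture~\ref{conj:Sierksma}, which the paper states \emph{as an open conjecture} in the ``Further directions'' section; the paper offers no proof and does not claim one. So there is no proof in the paper to compare your proposal against.

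As for the substance of your plan: the upper bound via the Sierksma configuration is correct and classical, so that half is fine. The difficulty is entirely in the lower bound, and here your proposal overstates what the equivariant method delivers. For prime (or prime-power) $r$, the configuration-space/test-map approach of Vu\'ci\'c--\v{Z}ivaljevi\'c and its refinements yields a lower bound on the number of Tverberg partitions, but that bound is of order roughly $(r/2)^{(r-1)(d+1)/2}$, which is strictly smaller than $((r-1)!)^d$; the equivariant Euler class computation does \emph{not} produce $((r-1)!)^d$ on the nose, and no amount of ``tracking it carefully'' is known to close that gap. In other words, Sierksma's conjecture is open even when $r$ is prime, so the sentence ``verify that it contributes exactly $((r-1)!)^d$, rather than a proper divisor of it'' hides the entire problem, not a routine bookkeeping step.

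Your Gale-dual and log-concavity suggestions are reasonable directions to explore, but they are speculative programme notes rather than a proof; nothing in the paper's techniques (Theorem~\ref{thm:CNA}, Lemma~\ref{lem:iso}, the Stieltjes covariance structure) is currently known to yield pointwise lower bounds on the number of Tverberg partitions of an arbitrary configuration. So the honest summary is: your proposal correctly identifies the standard partial results and the known obstruction, but it does not contain a new idea that would resolve the conjecture, and neither does the paper.
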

We note that there are quite a few different configurations for which this minimum is attained; see~\cite{Whi17,BLN17}, for example.

We can also ask about configurations with the maximum number of Tverberg partitions, though now, we need to assume that the points are \emph{generic}. We raise the following two closely related problems.

\begin{prob}
What is the maximum number $M(d,r)$ of Tverberg partitions of a generic set of $ (d+1)(r-1)+1$ points in $ \mathbb R^d$? What is the maximum number $P(d,r)$ of partitions of a generic set of $ d(r-1)+1$ points in $ \mathbb R^d$ into $ r$ parts whose positive hulls have a point in common?
\end {prob}
Let us note that for $d(r-1)+1$ points in $ \mathbb R^d$, it is not always the case that there exists a partition of this set into $r$ parts --- such a partition is called a \defnb{conic Tverberg partition} --- whose positive hulls share a point. It is, however, equivalent to Tverberg's theorem that such a partition exists if the points are contained in an open half space; see e.g.~\cite{Rou01} for a proof.

One may analyse Gaussian random sets to establish reasonable lower bounds for $P(d,r)$ as follows.
\begin{thm}
For fixed $r \in \mathbb{N}$ and large $d$, we have
\[
	P(d,r) \gtrsim \frac {(r/2)^m}{r!},
\]
where $m=d(r-1)+1$.
\end {thm}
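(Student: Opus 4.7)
\noindent\emph{Proof plan.} The plan is to lower bound $P(d,r)$ by the expected number of conic Tverberg partitions of $m = d(r-1)+1$ iid standard Gaussian points $\Xb_1,\dots,\Xb_m\in\rr^d$. Sample an independent uniformly random coloring $c\colon[m]\to[r]$, yielding the ordered partition $\pi_c=(c^{-1}(1),\dots,c^{-1}(r))$. Writing $c^{(k)}=e_k-(1/r)\mathbf 1\in\rr^r$ for $k\in[r]$, set $\tilde Z_j=c^{(c_j)}\otimes\Xb_j$; since $\sum_kc^{(k)}=0$, this vector lies in a fixed $(r-1)d$-dimensional subspace $U\subset\rr^r\otimes\rr^d$. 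A direct calculation (eliminating the common value $V$) shows that $\pi_c$ is a conic Tverberg partition into $r$ non-empty parts if and only if $c$ is surjective and $0\in\mathrm{pos}(\tilde Z_1,\dots,\tilde Z_m)$.

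The technical heart is the claim that for every realization of $c$,
\[
\Pb\bigl[0\in\mathrm{pos}(\tilde Z_1,\dots,\tilde Z_m)\mid c\bigr]\ \ge\ 2^{1-m}.
\]
Given $c$, the $\tilde Z_j$ are independent symmetric random vectors in $U\cong\rr^{(r-1)d}$ with $m=(r-1)d+1$. If they had continuous densities, Wendel's theorem (which extends from iid to independent symmetric vectors via its classical random-signs proof) would give an exact equality of $2^{1-m}$. They do not, since each $\tilde Z_j$ is supported on a $d$-dimensional subspace. So I perturb: set $\tilde Z_j^\delta=\tilde Z_j+\delta W_j$ for iid standard Gaussians $W_j$ in $U$, apply Wendel at $\delta>0$ to get exactly $2^{1-m}$, and take $\delta\to 0$. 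The event $\{0\in\mathrm{pos}(\cdot)\}$ is closed in $(\rr^{(r-1)d})^m$ (by a standard compactness argument on the unit simplex of non-negative coefficients), so Portmanteau's theorem yields the claim.

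With the claim in hand, for fixed $r$ and large $m$ we have $\Pb[c\text{ surjective}]=1-O(r(1-1/r)^m)=1-o(1)$, so the joint probability that $\pi_c$ is a conic Tverberg partition into $r$ non-empty parts is at least $(1-o(1))\cdot 2^{1-m}$. Summing over the $r^m$ colorings, $\Eb[\mathcal T_{\mathrm{ord}}(\Xb)]\ge(1-o(1))\cdot 2(r/2)^m$, where $\mathcal T_{\mathrm{ord}}$ counts ordered conic Tverberg partitions. Dividing by $r!$ (generically each unordered conic Tverberg partition yields $r!$ distinct orderings) gives $\Eb[\mathcal T(\Xb)]\gtrsim(r/2)^m/r!$, and therefore $P(d,r)\ge\Eb[\mathcal T(\Xb)]\gtrsim(r/2)^m/r!$.

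The main obstacle is the key claim: Wendel's theorem does not apply to the $\tilde Z_j$ on the nose because of their singular support on $d$-dimensional subspaces. The perturbation-plus-Portmanteau argument resolves this, but requires both the mild extension of Wendel from iid to independent symmetric vectors, and the observation that the positive-hull containment defines a closed event in the configuration space.
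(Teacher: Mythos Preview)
Your equivalence ``$\pi_c$ is a conic Tverberg partition into $r$ non-empty parts iff $c$ is surjective and $0\in\mathrm{pos}(\tilde Z_1,\dots,\tilde Z_m)$'' fails in exactly the direction you need. Unwinding $0=\sum_j\lambda_j\tilde Z_j$ gives a common value $V=\sum_{j\in c^{-1}(k)}\lambda_j\Xb_j$ for all $k$, but if $V=0$ you have produced no common \emph{nonzero} point of the positive hulls. This is not a null event: take $r=3$, $d=2$, $m=5$, and a coloring with part sizes $(3,1,1)$. A nonzero $V$ would have to lie on both rays $\rr_{\ge 0}\Xb_4$ and $\rr_{\ge 0}\Xb_5$, which is almost surely impossible, so $\Pb[\pi_c\text{ conic Tverberg}\mid c]=0$; yet $\Pb[0\in\mathrm{conv}(\tilde Z)\mid c]\ge 1/4$, witnessed by $0\in\mathrm{conv}(\Xb_1,\Xb_2,\Xb_3)$ with $\lambda_4=\lambda_5=0$. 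So your per-coloring lower bound $2^{1-m}$ on the conic Tverberg probability is simply false for such $c$.

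The fix is to restrict at the outset to colorings in which every color class has size at most $d$. For such $c$, the vectors $\{\Xb_j:j\in c^{-1}(k)\}$ are almost surely linearly independent, so $V=0$ forces all $\lambda_j=0$; hence your equivalence holds almost surely and your Wendel--Portmanteau argument delivers $\Pb[\pi_c\text{ conic Tverberg}\mid c]\ge 2^{1-m}$. A standard tail bound shows that for fixed $r$ and large $d$ a $(1-o(1))$-fraction of colorings satisfy this size constraint, and the rest of your averaging goes through. This is precisely the restriction the paper imposes from the start: it works only with unordered partitions whose parts have size in $[1,d]$, observes by codimension counting that the linear spans $\langle\Xb_j:j\in J_k\rangle$ have total codimension $d-1<d$ and hence meet nontrivially, and then invokes sign-flip symmetry of the Gaussian directly to get probability $\ge 2^{-m}$ for the all-positive sign pattern. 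That is Wendel's sign argument applied in place, without the tensor encoding, the perturbation, or Portmanteau --- so once patched, your route is correct but strictly more elaborate than the paper's.
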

\begin{proof}
Let $N(d,r)$ denote the number of (unordered) partitions $\{J_1,\dots, J_r\}$ of $[m]$ into $r$ parts such that $1 \le |J_k| \le d$ for all $k \in [d]$. Note that, for large \( d \), \( N(d,r) \) is asymptotically about \( r^m / r! \), i.e., the number of ways to partition a set of size \( m \) into \( r \) unordered parts, also known as the \defnb{Stirling number of the second kind}.

Let $\Xb_1,\dots, \Xb_m$ be be independent Gaussian vectors in $ \mathbb R^d$. We claim that, almost surely, for every such partition $ J_1,\dots, J_r$, there are signs $s_i \in \{-1, 1\}$ and real numbers $\lambda_i>0$ ($i \in [m]$) such that
\begin{equation}\label{eq:Gil}
	\text{Conv} \{\lambda_is(i) \Xb_i: i \in J_1\} \cap \cdots \cap \text{Conv} \{\lambda_i s(i) \Xb_i: i \in J_r \} \neq \varnothing.
\end{equation}
Indeed, the linear span $\langle\{\Xb_i: i \in J_k\}\rangle$ has codimension $d-|J_k|$, the sum of these codimensions is $rd-\sum |J_k|=d-1<d$, so the intersection of all these linear spans is at least one-dimensional, and therefore nonempty. Pick a unit vector in this intersection (uniformly at random), and then obtain $s_i$ and $\lambda_i>0$ for $i \in J_k$ by writing this vector as a linear combination of $\{\Xb_i: i \in J_k\}$.

Next, the probability for~\eqref{eq:Gil} to hold is the same for all choices of signs. Indeed, this follows from the fact that the normal distribution is invariant under sign-flips. It follows that the expected number of partitions where~\eqref{eq:Gil} holds with the all-one signs is at least $ N(d,r)/2^m$, and of course, in such a case, the positive hulls of the parts of such a partition intersect. We conclude that $ P(d,r) \ge N(d,r)/2^m$. When $ d$ is large, $ N(d,r)$ is asymptotically $r^m/r!$, and the claim follows.
\end {proof}

The argument above actually shows that $P(d,r) \gtrsim \ ({r^{m}}/{r!}) \cdot ({1}/{Z})$, where $Z$ is the probability that $d(r-1)+1$ Gaussian points in $\mathbb R^d$ admit a conic Tverberg partition; estimating $Z$ is a problem of independent interest and it seems possible that $Z$ behaves like $2^{-d+o(1)}$ for large $d$.

Finally, it seems reasonable to expect that $P(d+1,r) \lesssim M(d,r)$, though we do not have a  proper proof of such a relationship. Nevertheless, our belief is guided by the following intuition. Let $\Xb_1,\dots, \Xb_m \in \rr^{d+1}$ be a set of $m = (d+1)(r-1) + 1$ points with the maximum number $P(d+1,r)$ of conic Tverberg partitions, and suppose that the origin is \emph{not contained} in the convex hull of ${\Xb_1,\dots, \Xb_m}$. Then there exists a hyperplane such that all these points lie on the right of this hyperplane. Projecting ${\Xb_1,\dots, \Xb_m}$ onto this hyperplane gives us a $d$-dimensional set ${\Xb_1',\cdots, \Xb_m'}$,
and note that any conic Tverberg $r$-partition of $\Xb_1,\ldots, \Xb_m$ corresponds to a Tverberg $r$-partition of  $\Xb_1',\ldots, \Xb_m'$.
This implies that $P(d+1,r) \leq M(d,r)$ provided there exists a set with the right number of conic Tverberg partitions that does not contain the origin in its convex hull.

If it is in fact the case that $P(d+1,r) \lesssim M(d,r)$, then this would imply that $ M(d,r) \gtrsim {(r/2)^m}/{r!}$, where $m = (d+1)(r-1) + 1$, which should be contrasted with the bound of $m(d,r) \lesssim (r/e)^m$ implied by Sierksma's Conjecture (and Stirling's formula).

\begin{figure}[h!]
	\centering
	\includegraphics[scale=0.43]{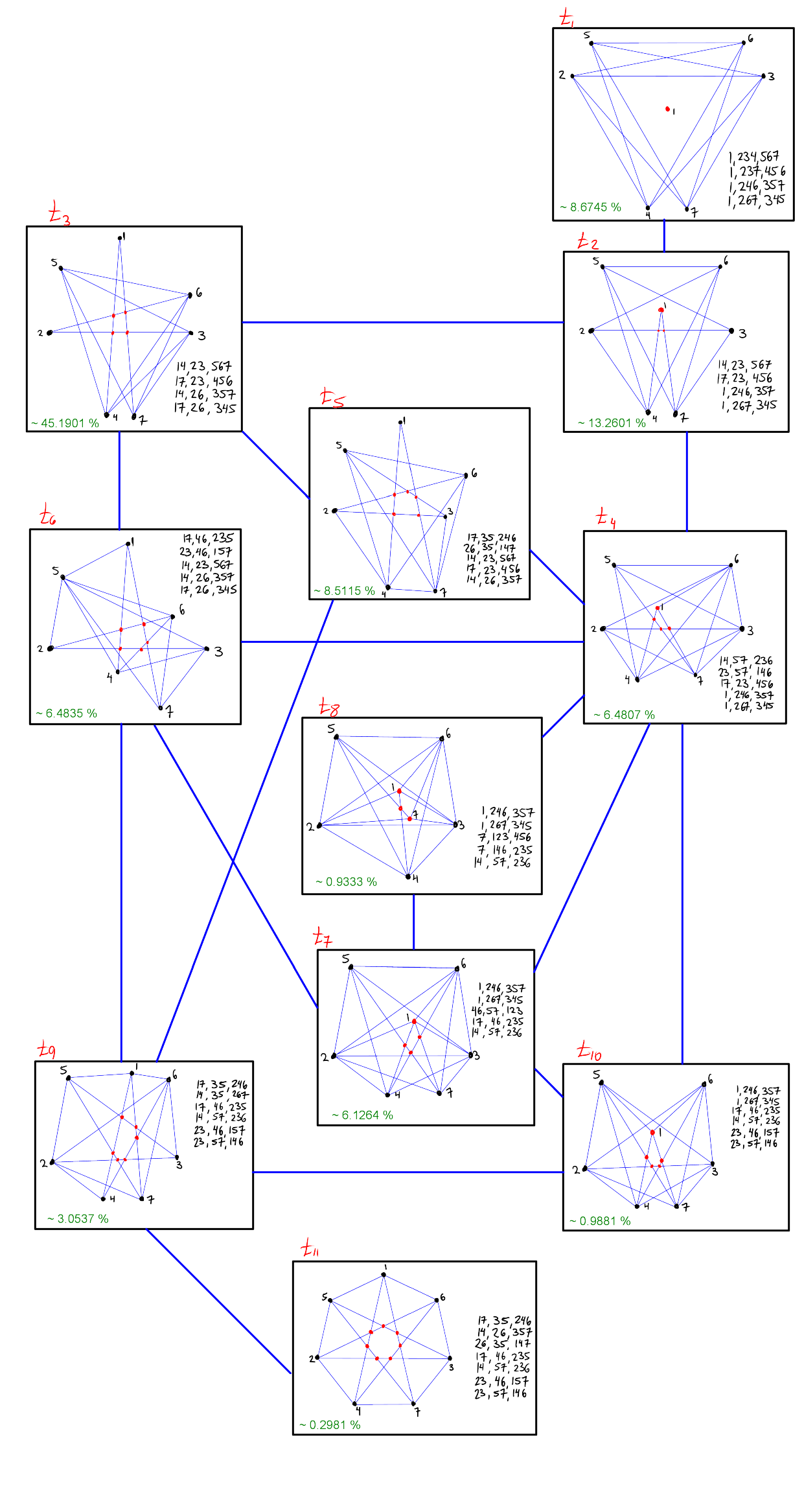}
	\caption{Eleven types of configurations for Tverberg partitions; the line segments between configurations correspond to B\'ar\'any's operations from~\cite{Bar23}.}
	\label{fig:7}
\end{figure}

In the context of counting Tverberg partitions, some numerical results are worth mentioning. We used simulation with Gaussian random points to study the possible Tverberg partitions of order three for seven points in generic position in the plane. The (potentially exhaustive) list of eleven configurations that we found is summarised in Figure~\ref{fig:7}. In these simulations, the minimum number of Tverberg partitions that we encountered was four (in agreement with Sierksma's conjecture), and the maximum was seven (as seen by the vertices of a regular 7-gon).

\subsection*{Reay's Conjecture} Micha Perles~\citep{MPerles} proposed using the probabilistic method to show that,
for fixed $r \in \mathbb{N}$ and $d$ large enough, every set of $2d+2$ points in $\mathbb R^d$ can be partitioned into $r$ sets --- such a partition is called a \defnb{Reay partition} --- whose convex hulls \emph{pairwise} intersect. If true, this would refute a conjecture of Reay~\citep{Rea79}. Some modest evidence for such a claim comes from simulations in~\citep{White} which suggest that for six random Gaussian points in the plane, the probability that a Reay partition exists is $\approx 0.427$.

\section*{Acknowledgements}
We would like to thank Jeff Kahn for initiating discussion between the authors, and Xiaoyu He, Igor Pak, and Sam Spiro for some insightful conversations.

\bibliographystyle{amsalpha}
\bibliography{radon_partitions}

\medskip

\appendix

\section{Proof of Theorem~\ref{thm:CLT}}	\label{sec:proof-CLT}

We will prove the theorem by computing the moment sequence of $S_n$,
and we will start with the following lemma.
		Recall that $Y_1,\ldots, Y_n$ are i.i.d. standard normal random variables, and  $\bY:=\tfrac{Y_1+\ldots +Y_n}{n}$.
	Let $D_i$ $(i \in [n])$ be the random variable 
	\[ D_i \ \ := \ \ \mathbbm{1}\{ Y_i \leq \bY\} \. - \. \mathbbm{1}\{ Y_i \geq \bY\} \ \ = \ \ 2 \.  \mathbbm{1}\{ Y_i \leq \bY\} \. - \. 1. \]
	For $k \geq 1$, let   \. $k!! \ts :=  \ts \prod_{i=0}^{\lceil k/2\rceil-1} (k-2i)$ \. be the \defnb{double factorial} of $k$. Note that, $(2k-1)!! = \frac{(2k)!}{k!2^k}.$
	
	\smallskip
	
	\begin{lemma}\label{lem:D_i}
		Fix \. $\ell\geq 1$\.. Then, as $n \to \infty$,
		\begin{align*}
			\Eb[ D_1 \cdots D_\ell] \ = \ 
			\begin{cases}
				\big(\frac{-2}{\pi n}\big)^{\ell/2} \. {(\ell-1)!!} \  + \ O(n^{-\ell/2-1}) & \text{ if $\ell$ is even},\\[3 pt]
				0 & \text{ if $\ell$ is odd.}
			\end{cases}  
		\end{align*}  
	\end{lemma}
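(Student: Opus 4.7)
The plan is to split the computation on the parity of $\ell$. For odd $\ell$, I would invoke the sign-flipping symmetry: the substitution $(Y_1,\ldots,Y_n) \mapsto (-Y_1,\ldots,-Y_n)$ preserves the joint law of the i.i.d.\ standard normals but negates $\bY$, and therefore negates each $D_i$. Taking expectations yields $\Eb[D_1 \cdots D_\ell] = (-1)^\ell \Eb[D_1 \cdots D_\ell]$, which forces the left-hand side to vanish when $\ell$ is odd.

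For even $\ell$, set $Z_i := Y_i - \bY$, so that $D_i = -\mathrm{sign}(Z_i)$ almost surely and $\prod_{i=1}^\ell D_i = \prod_{i=1}^\ell \mathrm{sign}(Z_i)$. A direct calculation shows that $(Z_1,\ldots,Z_\ell)$ is a centered jointly Gaussian vector with $\Eb[Z_i^2] = 1 - 1/n$ and $\Eb[Z_i Z_j] = -1/n$ for $i \neq j$. Rescaling via $\tilde{Z}_i := Z_i/\sqrt{1 - 1/n}$ gives standard Gaussians with pairwise correlation $\rho := -1/(n-1) = O(1/n)$, and the product of signs is unchanged.

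The core step is to expand each factor in the basis of the probabilists' Hermite polynomials as $\mathrm{sign}(x) = \sqrt{2/\pi}\.x + r(x)$, where $r$ lies in the $L^2$-closure of the span of the odd Hermite polynomials of degree at least $3$. Multiplying out $\prod_i \mathrm{sign}(\tilde{Z}_i)$, the contribution from selecting the linear piece $\sqrt{2/\pi}\.\tilde{Z}_i$ in every factor evaluates, by Isserlis's theorem, to
\[
	\Bigl(\tfrac{2}{\pi}\Bigr)^{\ell/2}\. \Eb\Bigl[\prod_{i=1}^\ell \tilde{Z}_i\Bigr] \ = \ \Bigl(\tfrac{2}{\pi}\Bigr)^{\ell/2}(\ell-1)!!\.\rho^{\ell/2} \ = \ \Bigl(\frac{-2}{\pi n}\Bigr)^{\ell/2}(\ell-1)!! \ + \ O\bigl(n^{-\ell/2-1}\bigr),
\]
since the number of perfect matchings of $[\ell]$ equals $(\ell-1)!!$ and $\rho^{\ell/2} = (-1/n)^{\ell/2}(1+O(1/n))$. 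In every other term of the expansion, at least one $\tilde{Z}_i$ is replaced by a Hermite polynomial of degree at least $3$, so the corresponding Wick contraction uses strictly more than $\ell/2$ edges, each carrying a factor of $|\rho| = O(1/n)$, and thus contributes $O(n^{-\ell/2-1})$.

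The main obstacle I anticipate is upgrading this last estimate into a uniform bound on the infinite sum of remainder terms, indexed by the Hermite modes of $r$. I would handle this by combining the rapid (superexponential) decay of the Hermite coefficients of $\mathrm{sign}$, the crude bound $(\ell+2M)!$ on the number of Wick pairings of total Hermite degree $\ell + 2M$, and the smallness $|\rho| \leq 1/(n-1)$. Since every remainder term carries at least one extra factor of $\rho$ beyond the leading order, the geometric gain in $n$ dominates the combinatorial cost, and the infinite sum converges to a quantity of size $O(n^{-\ell/2-1})$, completing the argument.
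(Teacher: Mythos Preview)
Your approach is correct and takes a genuinely different route from the paper. You expand $\mathrm{sign}$ in the Hermite basis and read off the asymptotics from the diagram formula for expectations of products of Hermite polynomials in correlated Gaussians; the paper instead writes out the joint density of $(Z_1,\ldots,Z_\ell)$ explicitly, Taylor-expands the cross term $\exp\bigl(-\tfrac{1}{n-\ell}\sum_{i\ne j}z_iz_j\bigr)$ in powers of $1/(n-\ell)$, and observes that $\prod_i \mathrm{sign}(z_i)$ annihilates every monomial in which some variable $z_j$ is absent, so only the $s=\ell/2$ term survives at leading order. Both arguments isolate the same main contribution $(2/\pi)^{\ell/2}(\ell-1)!!\,\rho^{\ell/2}$: yours via the first Hermite coefficient $\sqrt{2/\pi}$ and the $(\ell-1)!!$ perfect matchings in Isserlis's theorem, the paper's via the $\ell!/2^{\ell/2}$ occurrences of $z_1\cdots z_\ell$ in $\bigl(\sum_{i\ne j}z_iz_j\bigr)^{\ell/2}$ together with the one-dimensional integrals $\int_0^\infty z\,e^{-cz^2}\,dz$. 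The paper's argument is more self-contained, needing no Hermite or Wick machinery; yours is more structural and would extend immediately to other bounded odd functionals of the $Z_i$ once their first Hermite coefficient is known. Your sketched tail bound is also sound: the odd Hermite coefficients of $\mathrm{sign}$ decay like $1/(2^m m!)$, there are at most $(\ell+2M-1)!!$ diagrams at total Hermite degree $\ell+2M$, and the resulting remainder is dominated by a series that is geometric in $\ell|\rho|$ and hence convergent once $n>\ell+1$.
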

	
	\smallskip

	\begin{proof}
		The case when $\ell$ is odd follows from the fact that \. $(D_1,\ldots, D_\ell)$ \. has the same distribution as 
		\. $(-D_1,\ldots, -D_\ell)$\..
		We now consider the case when $\ell$ is even.

Let \. $Z_1,\ldots Z_\ell$  \. be jointly normal random variables given by 
\[  Z_i \ := \   \bY- Y_i \qquad (i \in [\ell]). \]
Note that these random variables satisfy
\[ \Eb[Z_i^2] \ = \  1-\frac{1}{n}, \qquad  \Eb[Z_i Z_j] \ = \  -\frac{1}{n}  \qquad \text{ for distinct } \ i,j \in [\ell].\]
It follows that the inverse \ts $Q$ \ts of the covariance matrix of $Z_1,\ldots, Z_\ell$
is equal to
\[Q_{ii} \ = \  1+ \frac{1}{n-\ell}, \qquad Q_{ij} \ = \  \frac{1}{n-\ell} \qquad \text{ for distinct } \ i,j \in [\ell]. \]
It then follows that 
\begin{align*}
	& \Eb[ D_1 \cdots D_\ell] \  \ = \ \ \Eb\big[ \prod_{i=1}^\ell \text{sign}(Z_i)\big]\\
	&  = \ \  \frac{1}{(2\pi)^{\ell/2} \sqrt{1-\tfrac{\ell}{n}}} \.   \int_{[-\infty,\infty]^{\ell}} \. \prod_{i=1}^\ell \text{sign}(z_i) \. \exp\big(-\tfrac{1}{2}(1+\tfrac{1}{n-\ell}) z_i^2\big) \.
	\prod_{i\neq j}  \exp\big(-\tfrac{z_iz_j}{n-\ell}\big) \. dz_1 \cdots dz_\ell\\
	& = \ \  \int_{[-\infty,\infty]^{\ell}}  f(z_1,\ldots, z_n) \sum_{s=0}^{\infty} \frac{1}{s!} \. \big(\frac{-1}{n-\ell}\big)^{s}  \. \big( \sum_{i \neq j} z_i z_j \big)^s \ dz_1 \cdots dz_\ell,
\end{align*}
where 
\[   f(z_1,\ldots, z_n) \ := \  \frac{1}{(2\pi)^{\ell/2} \sqrt{1-\tfrac{\ell}{n}}} \.  \prod_{i=1}^\ell \text{sign}(z_i) \. \exp\big(-\tfrac{1}{2}(1+\tfrac{1}{n-\ell}) z_i^2\big).  \]

We will now break the sum up based on the value of $s.$ For large $s,$ it now follows from a standard calculation that 
\begin{align*}
	 \int_{[-\infty,\infty]^{\ell}}  f(z_1,\ldots, z_n) \sum_{\blue{s> \ell/2}} \frac{1}{s!} \. \big(\frac{-1}{n-\ell}\big)^{s}  \. \big( \sum_{i \neq j} z_i z_j \big)^s \ dz_1 \cdots dz_\ell  \ \ = \ \  O(n^{-\ell/2-1}).
\end{align*}
Also note that, for any  $a_1,\ldots, a_\ell$ such that $a_j=0$ for some $j \in [\ell]$, we have 
\begin{align*}
& {(2\pi)^{\ell/2} \sqrt{1-\tfrac{\ell}{n}}}	   \int_{[-\infty,\infty]^{\ell}}  f(z_1,\ldots, z_n) \. \blue{z_1^{a_1} \ldots z_\ell^{a_\ell}}   \ dz_1 \cdots dz_\ell \\
	  &= \ \  \bigg(\prod_{i \neq j} \int_{-\infty}^{\infty} \. \text{sign}(z_i) \. \exp\big(-\tfrac{1}{2}(1+\tfrac{1}{n-\ell}) z_i^2\big) \.
	  z_i^{a_i}  \. dz_i  \bigg)  \ \bigg(  \int_{-\infty}^{\infty} \. \text{sign}(z_j) \. \exp\big(-\tfrac{1}{2}(1+\tfrac{1}{n-\ell}) z_j^2\big)   \. dz_j \bigg) \\
	  &= \ \  0,
\end{align*}
since the normal random variables are symmetric around 0. In particular, this is true for all $s <\ell/2$.

Combining the calculations above, we then have 
\begin{align*}
	& \Eb[ D_1 \cdots D_\ell] \\
	& = \ \   \int_{[-\infty,\infty]^{\ell}}  f(z_1,\ldots, z_n) \.  \frac{1}{(\ell/2)!} \.  \left(\frac{-1}{n-\ell}\right)^{\ell/2}  \. \left( \sum_{i \neq j} z_i z_j \right)^{\ell/2} \ dz_1 \cdots dz_\ell \. + \.  O(n^{-\ell/2-1})   \\[3 pt]
		& = \ \   \int_{[-\infty,\infty]^{\ell}}  f(z_1,\ldots, z_n) \.  \frac{1}{(\ell/2)!} \.  \left(\frac{-1}{n-\ell}\right)^{\ell/2}  \.  \frac{\ell!}{2^{\ell/2}} \.  z_1 \cdots z_\ell \ dz_1 \cdots dz_\ell \. + \.  O(n^{-\ell/2-1})   \\[3 pt]
	&  = \ \left(\frac{2}{\pi}\right)^{\ell/2}  \frac{ (\ell-1)!!}{\sqrt{1-\tfrac{\ell}{n}}}  \left(\frac{-1}{n-\ell}\right)^{\ell/2} \int_{[0,\infty]^{\ell}} \prod_{i=1}^\ell  |z_i| \exp\left(-\tfrac{1}{2}(1+\tfrac{1}{n-\ell}) z_i^2\right)  	 dz_1 \ldots dz_\ell
    \.+\. O(n^{-\ell/2-1}) \\[3 pt]
	& = \ \  \left(\frac{2}{\pi}\right)^{\ell/2}  \frac{ (\ell-1)!!}{\sqrt{1-\tfrac{\ell}{n}}}  \. \left(\frac{-1}{n-\ell}\right)^{\ell/2} \.   \left(1+\frac{1}{n-\ell} \right)^{-\ell}
	\. +  \. O(n^{-\ell/2-1}) \\[3 pt]
	& = \ \ \left(\frac{-2}{ \pi n}\right)^{\ell/2} \. {(\ell-1)!!} \  + \ O(n^{-\ell/2-1}),
\end{align*}
as desired.
	\end{proof}
	
	\smallskip
	
\begin{lemma}\label{lem:T_i}
	Fix $\ell \geq 1$. Then
	\begin{equation*}
	\lim_{n \to \infty} \Eb\bigg[	\bigg(\frac{D_1+\ldots+D_n}{\sqrt{n}}\bigg)^{\ell} \bigg] \ \ = \ \ 
	\begin{cases}
(\ell-1)!! \. (1-\tfrac{2}{\pi})^{\ell/2} & \text{ if $\ell$ is even},\\
0 & \text{ if $\ell$ is odd}.
	\end{cases}
	\end{equation*}
\end{lemma}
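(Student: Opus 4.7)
The plan is to expand $T_n := D_1 + \cdots + D_n$ raised to the $\ell$-th power using the multinomial theorem, collapse duplicate indices via $D_i^2 = 1$, classify the resulting tuples by their multiplicity patterns, and then apply the asymptotics in Lemma~\ref{lem:D_i}. The odd case is immediate: since $(D_1,\dots,D_n)$ has the same joint law as $(-D_1,\dots,-D_n)$, we get $\Eb[T_n^\ell] = 0$ whenever $\ell$ is odd.

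For even $\ell$, I would write
\[
\Eb[T_n^\ell] \ = \ \sum_{(i_1,\dots,i_\ell)\in[n]^\ell}\Eb[D_{i_1}\cdots D_{i_\ell}]
\]
and group tuples by the set $S \subseteq [n]$ of indices appearing an odd number of times and the set of indices appearing a positive even number of times, since $\prod_k D_{i_k} = \prod_{j\in S} D_j$. If $s = |S|$ indices appear oddly and $e$ indices appear evenly, then the total length forces $s + 2e \leq \ell$; the number of ordered $\ell$-tuples with this pattern is $O(n^{s+e})$, while Lemma~\ref{lem:D_i} gives $\Eb[\prod_{j\in S}D_j] = O(n^{-s/2})$ for even $s$ and $0$ for odd $s$. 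After dividing by $n^{\ell/2}$, the contribution is $O(n^{s/2+e-\ell/2})$, which is $O(1)$ precisely when $s+2e=\ell$ with $s$ even. At this boundary every odd-multiplicity index appears exactly once and every even-multiplicity index appears exactly twice.

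The main work is then the bookkeeping of the leading-order terms. For each even $s\in\{0,2,\dots,\ell\}$, write $e=(\ell-s)/2$; the number of ordered $\ell$-tuples with exactly $s$ singleton indices and $e$ doubleton indices is $\binom{n}{s}\binom{n-s}{e}\,\ell!/2^e$, and combining with the asymptotic $\Eb[\prod_{j=1}^{s}D_j]= (-2/(\pi n))^{s/2}(s-1)!! + O(n^{-s/2-1})$ from Lemma~\ref{lem:D_i}, the limit of $n^{-\ell/2}$ times this contribution equals
\[
\frac{\ell!}{s!\,e!\,2^{e}}\left(\frac{-2}{\pi}\right)^{s/2}(s-1)!!.
\]
A routine check (handling the subleading $O(n^{-s/2-1})$ correction and the $O(n^{s/2+e-1})$ tuples with higher multiplicities, all of which vanish after normalization) shows these are the only surviving pieces.

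It remains to identify the sum. Substituting $s = 2k$, using $(2k-1)!! = (2k)!/(2^{k}k!)$, collapses the $k$-th term to $\frac{\ell!}{2^{\ell/2}k!(\ell/2-k)!}(-2/\pi)^k$, so that
\[
\sum_{k=0}^{\ell/2}\frac{\ell!}{2^{\ell/2}k!(\ell/2-k)!}\left(\frac{-2}{\pi}\right)^k \ = \ \frac{\ell!}{2^{\ell/2}(\ell/2)!}\left(1-\frac{2}{\pi}\right)^{\ell/2} \ = \ (\ell-1)!!\,\left(1-\frac{2}{\pi}\right)^{\ell/2},
\]
using $(\ell-1)!! = \ell!/(2^{\ell/2}(\ell/2)!)$ for even $\ell$, which matches the target. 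The main obstacle is the careful combinatorial bookkeeping (in particular verifying rigorously that tuples with any higher multiplicity or with $s$ odd contribute only lower-order terms), but once those are controlled the binomial identity collapsing the sum does all the algebraic work.
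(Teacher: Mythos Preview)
Your proposal is correct and follows essentially the same route as the paper: expand the multinomial, use $D_i^2=1$ to reduce each product to $\prod_{j\in S}D_j$, observe that only the ``singletons plus doubletons'' patterns survive at order $n^{\ell/2}$, plug in Lemma~\ref{lem:D_i}, and collapse the resulting sum via the binomial theorem. Your bookkeeping of the error terms (bounding the number of tuples with a given multiplicity pattern by $O(n^{s+e})$ and checking $s/2+e\le \ell/2$) is in fact a bit more explicit than the paper's one-line dismissal of the $a_i\ge 3$ terms, but the argument is the same.
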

	
	\smallskip
	
	\begin{proof}
		The case when $\ell$ is odd follows directly from the odd case of Lemma~\ref{lem:D_i}.
		We now consider the case when $\ell$ is even.
        The sum of the terms of the form, \. $n^{-\ell/2}\Eb\big[D_1^{a_1}\ldots D_n^{a_n}\big]$ \. where \ts $a_i \geq 3$ \ts for some $i\in [n]$, does not exceed $O(n^{-1})$, so     
it follows from Lemma~\ref{lem:D_i} that
			\begin{align*}
		&\Eb\bigg[	\bigg(\frac{D_1+\ldots+D_n}{\sqrt{n}}\bigg)^{\ell} \bigg] \\[3 pt] 
		&= \ \    n^{-\ell/2}\sum_{s=0}^{\ell/2} \binom{n}{2s} \. \binom{n-2s}{\ell/2-s}  \. \frac{\ell!}{2^{\ell/2-s}} \. \Eb[D_1\ldots D_{2s}  \. (D_{2s+1})^2 \. \ldots (D_{\ell/2+s})^2] \ + \ O(n^{-1})\\[3 pt]
		&= \ \    n^{-\ell/2} \sum_{s=0}^{\ell/2} \binom{n}{2s} \. \binom{n-2s}{\ell/2-s}  \. \frac{\ell!}{2^{\ell/2-s}} \. \big(\frac{-2}{\pi n}\big)^{s} \. {(2s-1)!!}      \ + \ O(n^{-1})\\[3 pt]
		&= \ \     \sum_{s=0}^{\ell/2} \frac{1}{(2s)!} \. \frac{1}{(\ell/2-s)!}  \. \frac{\ell!}{2^{\ell/2-s}} \. \big(\frac{-2}{\pi}\big)^{s} \. {(2s-1)!!}      \ + \ O(n^{-1})\\[3 pt]
		&= \ \     \frac{\ell!}{2^{\ell/2}}  \sum_{s=0}^{\ell/2}  \big(\frac{-2}{\pi}\big)^{s} \.   \frac{1}{s!(\ell/2-s)!}      \ + \ O(n^{-1})\\[3 pt]
		&= \ \     \frac{\ell!}{2^{\ell/2}(\ell/2)!} \bigg(1-\frac{2}{\pi}\bigg)^{\ell/2}  \ + \ O(n^{-1})\\[3 pt]
		&= \ \    (\ell-1)!! \. (1-\tfrac{2}{\pi})^{\ell/2} \ + \ O(n^{-1}),
	\end{align*}
	and the lemma now follows.
	\end{proof}

	\smallskip
	
	\begin{proof}[Proof of Theorem~\ref{thm:CLT}]
			It  follows from Lemma~\ref{lem:iso} 
		that 
		\[  {S_n- \tfrac{n}{2}} \ \  = \  \  \sum_{i=1}^n  \left(  \mathbbm{1}\{ Y_i \leq \bY\}  - \tfrac{1}{2}\right) \ \ = \ \ \frac{1}{2} \sum_{i=1}^n D_i. \]
		On the other hand, it follows from Lemma~\ref{lem:T_i} that
		\[ \frac{D_1+\ldots+D_n}{\sqrt{n}} \ \  \text{ converges in distribution to  normal RV with mean $0$ and variance $1-\frac{2}{\pi}$} .   \]
		The theorem now follows by combining the two observations above.
	\end{proof}

\end{document}